\def\stocmode{0}
\def\arxivmode{0}
\def\fastmode{0}
\def\showauthornotes{2}
\def\showkeys{0}
\def\showdraftbox{1}
\def\showcolorlinks{1}
\def\usemicrotype{1}
\def\showfixme{1}
\newcommand{\1}{\vvmathbb{1}}
\newcommand{\slog}[1]{\sqrt{\vphantom{M_k}\smash[b]{\log #1}}}
\newcommand{\rightspace}{\qquad\qquad\qquad\qquad}
\newtheorem{theorem}{Theorem}[section]
\newtheorem*{theorem*}{Theorem}
\newtheorem*{proposition*}{Proposition}
\newtheorem{lemma}[theorem]{Lemma}
\newtheorem*{lemma*}{Lemma}
\newtheorem{corollary}[theorem]{Corollary}
\newtheorem*{conjecture*}{Conjecture}
\newtheorem*{fact*}{Fact}
\newtheorem*{exercise*}{Exercise}
\newtheorem*{hypothesis*}{Hypothesis}
\theoremstyle{definition}
\newtheorem{definition}[theorem]{Definition}
\newtheorem{exercise-easy}[theorem]{Exercise}
\newtheorem{exercise-med}[theorem]{Exercise}
\newtheorem{exercise-hard}[theorem]{Exercise$^\star$}
\newtheorem*{claim*}{Claim}
\newtheorem{remark}[theorem]{Remark}
\newtheorem*{remark*}{Remark}
\newtheorem*{observation*}{Observation}
\let\mathbb\varmathbb
\definecolor{bleudefrance}{rgb}{0.01, 0.1, 1.0}
\definecolor{azure}{rgb}{0.0, 0.5, 1.0}
\newcommand{\savehyperref}[2]{\texorpdfstring{\hyperref[#1]{#2}}{#2}}
\newcommand{\Sref}[1]{\hyperref[#1]{\S\ref*{#1}}}
\newcommand{\mynotes}[1]{{\sffamily\small\color{teal}{#1}}\medskip}
\newcommand{\Authornote}[2]{{\sffamily\small\color{Maroon}{[#1: #2]}}\medskip}
\newcommand{\Authornotecolored}[3]{{\sffamily\small\color{#1}{[#2: #3]}}}
\newcommand{\Authorcomment}[2]{{\sffamily\small\color{gray}{[#1: #2]}}}
\newcommand{\Authorstartcomment}[1]{\sffamily\small\color{gray}[#1: }
\newcommand{\Authorfnote}[2]{\footnote{\color{red}{#1: #2}}}
\newcommand{\Authorfixme}[1]{\Authornote{#1}{\textbf{??}}}
\newcommand{\Authormarginmark}[1]{\marginpar{\textcolor{red}{\fbox{\Large #1:!}}}}
\newcommand{\myexplain}[1]{{\sffamily\small\color{red}{\noindent [Explanation:\medskip\newline \begin{quote}#1\hfill]\end{quote}}}\medskip}
\newcommand{\explain}[1]{{\sffamily\small\color{red}{#1}}\medskip}
\newcommand{\mynotes}[1]{}
\newcommand{\Authornote}[2]{}
\newcommand{\Authornotecolored}[3]{}
\newcommand{\Authorcomment}[2]{}
\newcommand{\Authorstartcomment}[1]{}
\newcommand{\Authorfnote}[2]{}
\newcommand{\Authorfixme}[1]{}
\newcommand{\Authormarginmark}[1]{}
\newcommand{\myexplain}[1]{}
\newcommand{\explain}[1]{}
\renewcommand{\myexplain}[1]{{\sffamily\small\color{red}{\noindent \begin{quote}{\bf Explanation:} \medskip\newline #1\end{quote}}}\medskip}
\newcommand{\Esymb}{\mathbb{E}}
\newcommand{\Psymb}{\mathbb{P}}
\DeclareMathOperator*{\E}{\Esymb}
\DeclareMathOperator*{\ProbOp}{\Psymb}
\renewcommand{\Pr}{\ProbOp}
\newcommand{\textparen}[1]{\text{(#1)}}
\newcommand{\because}[1]{\textparen{because #1}}
\renewcommand{\because}[1]{\textparen{because #1}}
\newcommand{\seteq}{\mathrel{\mathop:}=}
\newcommand\bdot\bullet
\DeclareMathOperator{\poly}{poly}
\DeclareMathOperator{\conv}{conv}
\DeclareMathOperator{\rank}{rank}
\newcommand{\R}{\mathbb R}
\newcommand{\cA}{\mathcal A}
\newcommand{\cE}{\mathcal E}
\newcommand{\cN}{\mathcal N}
\newcommand{\cS}{\mathcal S}
\newcommand{\sfK}{\mathsf K}
\newcommand{\sfR}{\mathsf R}
\renewcommand{\leq}{\leqslant}
\renewcommand{\geq}{\geqslant}
\let\epsilon=\varepsilon
\numberwithin{equation}{section}
\newcommand\MYcurrentlabel{xxx}
\newcommand{\MYstore}[2]{%
  \global\expandafter \def \csname MYMEMORY #1 \endcsname{#2}%
}
\newcommand{\MYload}[1]{%
  \csname MYMEMORY #1 \endcsname%
}
\newcommand{\MYnewlabel}[1]{%
  \renewcommand\MYcurrentlabel{#1}%
  \MYoldlabel{#1}%
}
\newcommand{\MYdummylabel}[1]{}
\newcommand{\torestate}[1]{%
  \let\MYoldlabel\label%
  \let\label\MYnewlabel%
  #1%
  \MYstore{\MYcurrentlabel}{#1}%
  \let\label\MYoldlabel%
}
\newcommand{\restatetheorem}[1]{%
  \let\MYoldlabel\label
  \let\label\MYdummylabel
  \begin{theorem*}[Restatement of \prettyref{#1}]
    \MYload{#1}
  \end{theorem*}
  \let\label\MYoldlabel
}
\newcommand{\restatelemma}[1]{%
  \let\MYoldlabel\label
  \let\label\MYdummylabel
  \begin{lemma*}[Restatement of \prettyref{#1}]
    \MYload{#1}
  \end{lemma*}
  \let\label\MYoldlabel
}
\newcommand{\restateprop}[1]{%
  \let\MYoldlabel\label
  \let\label\MYdummylabel
  \begin{proposition*}[Restatement of \prettyref{#1}]
    \MYload{#1}
  \end{proposition*}
  \let\label\MYoldlabel
}
\newcommand{\restatefact}[1]{%
  \let\MYoldlabel\label
  \let\label\MYdummylabel
  \begin{fact*}[Restatement of \prettyref{#1}]
    \MYload{#1}
  \end{fact*}
  \let\label\MYoldlabel
}
\newcommand{\restate}[1]{%
  \let\MYoldlabel\label
  \let\label\MYdummylabel
  \MYload{#1}
  \let\label\MYoldlabel
}
\newcommand{\addreferencesection}{
  \phantomsection
\ifnum\stocmode=0
  \addcontentsline{toc}{section}{References}
\else
  \addcontentsline{toc}{section}{References \hspace*{1in} --------- End of extended abstract ---------}
\fi

}
\newcommand{\e}{\epsilon}
\renewcommand{\paragraph}[1]{\medskip\noindent{\bf #1.}}
\let\pref=\prettyref
\newcommand{\diam}{\mathrm{diam}}
\DeclareMathOperator{\tr}{tr}
\newcommand{\vertiii}[1]{{\left\vert\kern-0.25ex\left\vert\kern-0.25ex\left\vert #1 
          \right\vert\kern-0.25ex\right\vert\kern-0.25ex\right\vert}}
\newcommand\f{\varphi}
\renewcommand{\mathbb}{\vvmathbb}
\begin{document}

\title{Spectral hypergraph sparsification via chaining}
\author{James R. Lee\thanks{Computer Science \& Engineering, University of Washington. \tt{jrl@cs.washington.edu}}}
\date{}

\maketitle

\begin{abstract}
   In a hypergraph on $n$ vertices where $D$ is the maximum size of a hyperedge,
   there is a weighted hypergraph spectral $\e$-sparsifier
   with at most $O(\e^{-2} \log(D) \cdot n \log n)$ hyperedges.
   This improves over the bound of Kapralov, Krauthgamer, Tardos and Yoshida (2021)
   who achieve $O(\e^{-4} n (\log n)^3)$, as well as the bound $O(\e^{-2} D^3 n \log n)$
   obtained by Bansal, Svensson, and Trevisan (2019).
   The same sparsification result was obtained independently by Jambulapati, Liu, and Sidford (2022).
\end{abstract}

\begingroup
\hypersetup{linktocpage=false}
\setcounter{tocdepth}{2}
\tableofcontents
\endgroup

\section{Introduction}

Consider a weighted hypergraph $H=(V,E,w)$ with $w \in \R_+^E$ and the corresponding energy:
For $x \in \R^V$,
\[
   Q_H(x) \seteq \sum_{e \in E} w_e \max_{\{u,v\} \in {e \choose 2}} (x_u-x_v)^{2}
\]
The problem of minimizing the energy $Q_H$ over various convex bodies
occurs in many applied contexts, especially in machine learning; we refer to the discussion in \cite{KKTY21b}.

In the graph case---when all the hyperedges have cardinality $2$---this corresponds to the quadratic form associated to the
weighted Laplacian and carries a physical interpretation as the potential energy of a family of springs
indexed by $\{u,v\} \in E$ whose respective endpoints are pinned at $x_u$ and $x_v$.
Let us mention the appealing analog for hypergraphs: If we stretch a rubber band around vertices
pinned at locations $\{ x_u : u \in e \}$, then $\max_{\{u,v\} \in {e \choose 2}} (x_u-x_v)^2$
is proportional to its potential energy. Here the weight $w_e$ represents the elasticity of the band.

\smallskip

For hypergraphs, the edge set $E$ could have cardinality as large $2^{|V|}$, and one can ask if there is a substantially
smaller hypergraph that approximates the energy for every configuration of vertices.
Soma and Yoshida \cite{SY19} formalized the following notion of spectral sparsification for hypergraphs, 
generalizing the well-studied notion for graphs \cite{ST11}.
Say that a weighted hypergraph $\tilde{H}=(V,\tilde{E},\tilde{w})$ is a {\em spectral $\e$-sparsifier} for $H$ if
$\tilde{E} \subseteq E$, and
\begin{equation}\label{eq:eps-sparsifier}
   |Q_H(x) - Q_{\tilde{H}}(x)| \leq \e Q_H(x),\qquad \forall x \in \R^V\,.
\end{equation}

We will use $n \seteq |V|$ throughout.
The authors \cite{SY19} showed that one can always find a spectral $\e$-sparsifier $\tilde{H}$ with
$|\tilde{E}| \leq O(n^3/\e^2)$.
In \cite{BST19}, the authors established a bound of $O(\e^{-2} D^3 n \log n)$, where $D \seteq \max \{ |e| : e \in E \}$
is often called the {\em rank of $H$,} and subsequently the authors of \cite{KKTY21a} achieved an upper bound of $n D (\e^{-1} \log n)^{O(1)}$.

\smallskip

Finally, in a recent and remarkable breakthrough, the authors of \cite{KKTY21b} show that one can obtain a spectral sparsifier with at most
$O(n (\log n)^3/\e^4)$ hyperedges, bypassing the polynomial dependence on the rank, and coming within $\poly(\e^{-1} \log n)$ factors of
the optimal bound.
By refining their approach via Talagrand's powerful generic chaining theory, we obtain the following improvement.

\begin{theorem}\label{thm:main}
   For any $n$-vertex weighted hypergraph $H=(V,E,w)$ and $\e > 0$, there is a spectral $\e$-sparsifier
   $\tilde{H}=(V,\tilde{E},\tilde{w})$ for $H$ with
   \[
      |\tilde{E}| \leq O\left(\frac{\log D}{\e^2} n \log n\right)\,,
   \]
   where $D \seteq \max_{e \in E} |e|$.
\end{theorem}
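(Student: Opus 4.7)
The plan is to prove \pref{thm:main} by importance sampling, with the uniform deviation controlled through Talagrand's generic chaining rather than an $\e$-net union bound. Define
\[
   T \seteq \{x \in \R^V : Q_H(x) \leq 1\},\qquad f_e(x) \seteq \max_{\{u,v\} \in \binom{e}{2}} |x_u - x_v|,
\]
and assign to each hyperedge a leverage score $\tau_e \geq w_e \sup_{x \in T} f_e(x)^2$ satisfying $\sum_e \tau_e = O(n)$; such scores are a hypergraph analog of effective resistance and should be extractable from the framework developed in \cite{KKTY21b}. Sample each hyperedge independently with probability $p_e \seteq \min(1, C\tau_e \log(n)\log(D)/\e^2)$, set $\tilde w_e \seteq (\delta_e/p_e) w_e$ with $\delta_e \sim \mathrm{Bernoulli}(p_e)$, and observe that $\E|\tilde E| = \sum_e p_e = O(\e^{-2} n \log n \log D)$, matching the target.

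Verifying the sparsification property \eqref{eq:eps-sparsifier} reduces to showing, with positive probability,
\[
   \sup_{x \in T} \left|\sum_{e \in E} \Bigl(\tfrac{\delta_e}{p_e}-1\Bigr) w_e f_e(x)^2\right| \leq \e.
\]
The mean-zero process $X_x \seteq \sum_e (\delta_e/p_e-1) w_e f_e(x)^2$ is a sum of independent bounded random variables, so Bernstein's inequality yields sub-Gaussian behavior on the small scale and sub-exponential behavior on the large scale, with pseudo-metrics
\[
   d_2(x,y)^2 \seteq \sum_e \frac{w_e^2}{p_e}\bigl(f_e(x)^2 - f_e(y)^2\bigr)^2, \qquad d_\infty(x,y) \seteq \max_e \frac{w_e}{p_e}\bigl|f_e(x)^2-f_e(y)^2\bigr|.
\]
Talagrand's mixed generic chaining bound then yields
\[
   \E \sup_{x \in T} |X_x| = O\bigl(\gamma_2(T, d_2) + \gamma_1(T, d_\infty)\bigr),
\]
and a standard concentration inequality for suprema of empirical processes converts the expectation bound into a high-probability statement, so it suffices to control both $\gamma$-functionals by $O(\e)$.

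The main obstacle, and the place where the new argument must improve on \cite{KKTY21b}, is the construction of an admissible sequence of partitions of $T$ that realizes these functionals. The source of the difficulty is that $f_e(x)^2$ is a max over $|\binom{e}{2}| \leq D^2$ squared linear forms, so the increments $f_e(x)^2 - f_e(y)^2$ do not admit the clean quadratic control available in the graph case. My plan is to decompose the hyperedges into dyadic classes according to the magnitude of $f_e$, and within each class pass to an arg-max pair $\{u,v\} \in \binom{e}{2}$ that serves as a graph surrogate; the chaining tree is designed so that successive partitions refine the choice of surrogate and the values of the resulting linear forms simultaneously. Resolving the arg-max over $O(D^2)$ candidates costs a single entropy factor of $\log(D^2) = O(\log D)$, rather than being paid at every level of an $\e$-net, and this is precisely the source of the $\log D$ in the theorem. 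The remaining $n \log n$ factor arises from the graph-case chaining estimate applied to the surrogates, combining the volumetric bound $\log N(T, d_2, r) = O(n \log(1/r))$ with the leverage-score normalization $\sum_e \tau_e = O(n)$. Combining these two contributions and choosing $C$ sufficiently large yields $\gamma_2(T,d_2) + \gamma_1(T,d_\infty) = O(\e)$, completing the proof of \pref{thm:main}.
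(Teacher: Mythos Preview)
Your proposal has two genuine gaps, one in each of its main components.

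\medskip
\textbf{Leverage scores.} You assume the existence of scores $\tau_e \geq w_e \sup_{x\in T} f_e(x)^2$ with $\sum_e \tau_e = O(n)$, saying they ``should be extractable'' from prior work. This is not free. In the paper it is obtained by choosing edge conductances $c$ on the clique graph $G$ so that, after the transformation $x \mapsto L_G^{+/2} x$, one has simultaneously $T \subseteq B_2^n$ and $\sum_e w_e \sfR_{\max}(e) \leq n-1$. The first property requires splitting each hyperedge weight across pairs, and the second requires that within each hyperedge the effective resistances are \emph{balanced}: $\sfR_{ij} = \sfR_{\max}(e)$ whenever $c_{ij}^e > 0$. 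The paper achieves this via a convex program maximizing $\log\det(L_G+J)$; the KKT conditions yield the balance property. Without this, Foster's theorem gives only $\sum_{\{i,j\}} c_{ij}\sfR_{ij} \leq n-1$, not $\sum_e w_e \sfR_{\max}(e) \leq O(n)$.

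\medskip
\textbf{The chaining step.} Your plan to ``pass to an arg-max surrogate pair'' and ``refine the choice of surrogate'' along the chaining tree, paying $\log D$ once, is not a workable chaining argument as stated. The maximizing pair in $f_e(x)$ depends on $x$, so fixing a surrogate at some scale does not control the increment $f_e(x)^2 - f_e(y)^2$ at finer scales; the surrogate must be allowed to change at every level, and each change potentially contributes. Your fallback, the volumetric bound $\log N(T,d_2,r)=O(n\log(1/r))$, is Dudley's inequality and yields an extra $\log n$ factor (indeed, the paper shows in \pref{sec:warmup} that even Talagrand's sharper $\ell_2$-of-entropy bound, which already exploits $T\subseteq B_2^n$, gives only $O(\e^{-2} n(\log n)^2)$). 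To reach $O(\e^{-2}\log(D)\, n\log n)$ the paper does something different: it treats each $N_k(x)=f_{e_k}(x)$ as a \emph{norm} and proves the general inequality
\[
   \E\sup_{x\in T}\sum_k \e_k N_k(x)^2 \lesssim \bigl(\lambda\sqrt{\log n}+\kappa\bigr)\sup_{x\in T}\Bigl(\sum_k N_k(x)^2\Bigr)^{1/2},
\]
where $\lambda=\max_k(\E N_k(g)^2)^{1/2}$ and $\kappa=\E\max_k N_k(g)$. The $\sqrt{\log D}$ arises because $\lambda\lesssim\|A\|_{2\to\infty}\sqrt{\log D}$ for $\ell_\infty$ norms over $\leq D^2$ coordinates. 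The proof uses growth functionals built from $\vvvert{u}^2 = \|u\|^2 + \sum_k N_k(u)^2$, exploiting $2$-convexity of the Euclidean piece and of the outer $\ell_2$ sum while carefully avoiding any appeal to $2$-convexity of the $N_k$ themselves. This is the new idea, and your sketch does not contain it or an alternative.
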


As in many prior works,
\pref{thm:main} is proved by defining a distribution on $E$
and then sampling edges independently from this distribution.
For approaches based on independent sampling, the bound of \pref{thm:main} is tight up to a constant factor
for every fixed $D$. In particular, this generalizes the analysis of independent
random sampling for graph sparsifiers \cite{SS11} where $D=2$.

It should be noted that for {\em cut sparsifiers}, the $\log D$ factor can be removed \cite{CKN20}.
This corresponds to the weaker notion where we only require that \eqref{eq:eps-sparsifier} holds
for $x \in \{-1,1\}^V$.
Whether the $\log D$ factor can be removed in general remains an intriguing open question.

Our proof of \pref{thm:main} entails an algorithm for constructing the sparsifier $\tilde{H}$ whose
running time is polynomial in the size of the input. But our sampling analysis can also be applied
directly to the faster algorithm presented in \cite{KKTY21b} whose running time is $|E|D \poly(\log |E|)+ \poly(n)$.

\medskip

\pref{thm:main} was proved independently and concurrently
by Jambulapati, Liu, and Sidford \cite{JLS22}, via a closely related approach.
While their main chaining result is somewhat less general than the one proved here (see \eqref{eq:lem11prev} below),
they also present a near-linear time algorithm for generating suitable sampling probabilities $\{\mu_e : e \in E \}$.
This improves the running time to $|E| D \poly(\log |E|)$.

\subsection{The random selector method and chaining for subgaussian processes}

Suppose we have a probability distribution $\mu \in \R_+^E$ on hyperedges in $H$.
We sample hyperedges $\tilde{E} = \{e_1,e_2,\ldots,e_M\}$ independently according to $\mu$,
and define the random weighted hypergraph $\tilde{H}=(V,\tilde{E},\tilde{w})$ so that
\[
   Q_{\tilde{H}}(x) = \frac{1}{M} \sum_{k=1}^M \frac{w_{e_k}}{\mu_{e_k}} Q_{e_k}(x)\,,
\]
where we define
\[
   Q_e(x) \seteq \max_{\{i,j\} \in {e \choose 2}} (x_i-x_j)^2\,,
\]
and the edge weights
\begin{equation}\label{eq:edge-weights}
   \tilde{w}_e \seteq \frac{\# \left\{ k \in [M] : e_k = e \right\}}{M} \cdot \frac{w_e}{\mu_e}\,.
\end{equation}
In particular, this gives $\E[Q_{\tilde{H}}(x)] = Q_{H}(x)$ for all $x \in \R^V$.

\smallskip

Now in order to find a spectral $\e$-sparsifier, we want to choose $M$ sufficiently large so that
\[
   \E \max_{x : Q_H(x) \leq 1} \left|Q_H(x)-Q_{\tilde{H}}(x)\right| \leq \e\,.
\]

To control concentration of $Q_{\tilde{H}}(x)$ around its mean, it suffices
to bound the average maximal fluctuations.
Thus by a standard sort of reduction (see \pref{sec:sampling} and also \cite[Lem 9.1.11]{TalagrandBook2014} for a general
formulation), it suffices to prove that for any
{\em fixed} hyperedges $e_1,\ldots,e_M \in E$,
\begin{equation}\label{eq:suffices}
   \E \max_{x : Q_H(x) \leq 1} \sum_{k=1}^M \e_k \frac{w_{e_k}}{\mu_{e_k}} Q_{e_k}(x) \leq O(\e M)\,,
\end{equation}
where $\e_1,\ldots,\e_M \in \{-1,1\}$ are i.i.d. random signs.

Thus our task is now to control the left-hand side of \eqref{eq:suffices}.
If we define the random variable 
\[
   V_x \seteq \sum_{k=1}^M \e_k \frac{w_{e_k}}{\mu_{e_k}} Q_{e_k}(x)\,,
\]
then $\{V_x : x \in \R^n \}$ is a subgaussian process (defined in \eqref{eq:subgaussian-tail}) with respect to the (semi)metric
\[
   d(x,\hat{x}) \seteq \left(\sum_{k=1}^M \left(\frac{w_{e_k}}{\mu_{e_k}}\right)^2 \left|Q_{e_k}(x)-Q_{e_k}(\hat{x})\right|^2\right)^{1/2}.
\]
There are well-developed tools for studying quantities like $\E \max \{ V_x : Q_H(x) \leq 1 \}$, but they rely on an understanding
of the geometry of the space $(\R^n,d)$, and a correct choice of distribution $\mu$ is essential for making
this geometry well-behaved.

\paragraph{Importance sampling}
For spectral graph sparsification, one chooses the sampling probability $\mu_e$ to be proportional
to the effective resistance across $e$ \cite{SS11}.
In order to extend this to hypergraphs, the authors of \cite{BST19}
define sampling probabilities $\{\mu_e : e \in E\}$
derived from the graph $G=(V,F)$, where $F \seteq \bigcup_{e \in E} {e \choose 2}$ is a union
of cliques on every hyperedge. They
take
\[
   \mu_e \propto \sum_{\{u,v\} \in {e \choose 2}} \sfR_{uv}\,, 
\]
where $\sfR_{uv}$ denotes the effective resistance
between a pair of vertices $u,v$ in $G$.

To remove the polynomial dependence on $D$, the authors of \cite{KKTY21b} choose
a {\em weighted graph} $G=(V,F,c)$ and define
\[
   \mu_e \propto w_e  \max \left\{ \sfR_{uv} : \{u,v\} \in \textstyle{{e \choose 2}} \right\}.
\]
Now $\sfR_{uv}$ is the effective resistance in $G$, where edges $\{u,v\} \in F$
have conductance $c_{uv}$.

\smallskip

Let $L_G$ denote the corresponding (weighted) graph Laplacian, and use
$L_G^{+}$ to denote its pseudoinverse.
Define $T \seteq \{ v \in \R^n : Q_H(L_G^{+/2} v) \leq 1\}$.
This construction of the sampling probabilities allows us to write
\begin{equation}\label{eq:s2}
   \E \max_{Q_H(x) \leq 1} V_x = \E \max_{v \in T} \sum_{k=1}^M \e_k \max_{\{i,j\} \in e_k} \langle v, y_{ij}^{e_k}\rangle^2\,,
\end{equation}
for a family of vectors $\{ y_{ij}^{e_k} \}$ that depends on our choice of edge conductances $c \in \R_+^F$ in $G$.

A central component of this approach is the existence of conductances that ensure two key properties:
\begin{enumerate}
   \item $T \subseteq B_2^n \seteq \{ x \in \R^n : \|x\| \leq 1 \}$,
   \item $\|y_{ij}^{e_k}\| \leq O(\sqrt{n})$ for all $k=1,\ldots,M$ and $\{i,j\} \in {e_k \choose 2}$.
\end{enumerate}
We return to a discussion of these properties in a moment.

\paragraph{Chaining bounds}
Note that the right-hand side of \eqref{eq:s2} can be written as
\[
   \E \max_{v \in T} \sum_{k=1}^M \e_k N_k(v)^2,
\]
where $N_k$ is an $\ell_{\infty}$ norm on a subset of the coordinates of $Av$, and $A$ is a
matrix whose rows are the vectors $\{y_{ij}^{e_k}\}$.
Thus in \pref{sec:bernoulli}, we apply aspects of the generic chaining theory (see the extensive reference \cite{TalagrandBook2014})
to the analysis of such expected maxima.

\smallskip

For readers familiar with the theory, let us note that a bound of $|\tilde{E}| \leq O(\e^{-2} n (\log n)^3)$ 
in \pref{thm:main} follows from applying Dudley's entropy bound (cf. \eqref{eq:dudley}) in a straightforward way.
A bound of $|\tilde{E}| \leq O(\e^{-2} n (\log n)^2)$ follows from a deeper inequality
of Talagrand (see \pref{thm:talagrand-uc} and \pref{sec:warmup}) that exploits property (1) above,
that $T$ is a subset of the Euclidean unit ball.

\smallskip

Finally, in order to achieve $|\tilde{E}| \leq O(\e^{-2} \log(D) \cdot n \log n)$, we need to exploit
further structure of the norms $\{N_k\}$ in a novel way. Our approach is modeled after Rudelson's
geometric argument \cite{Rudelson99} which, roughly speaking,
handles the case where each $N_k$ is a $1$-dimensional norm,
as well as Talagrand's method of chaining via growth functionals
(see \pref{sec:growth-functionals} and \pref{sec:advanced}).

\smallskip

To state this bound, let us consider arbitrary norms $N_1,\ldots,N_M$ on $\R^n$.
Define:
\begin{align*}
   \kappa &\seteq \E \max_{k \in [M]} N_k(g)\,, \\
   \lambda &\seteq \max_{k \in [M]} \left(\E[N_k(g)^2]\right)^{1/2}\,,
\end{align*}
where $g$ is a standard $n$-dimensional Gaussian.
In \pref{sec:advanced}, we prove that for any $T \subseteq B_2^n$,
\begin{equation}\label{eq:lem11prev}
      \E \sup_{x \in T} \sum_{k=1}^M \e_k N_k(x)^2 \leq O\!\left(\lambda \sqrt{\log n} + \kappa\right)
      \cdot \sup_{x \in T} \left(\sum_{k=1}^M N_k(x)^2 \right)^{1/2}
\end{equation}

When $M=m$, each $N_k$ is a $1$-dimensional norm $N_k(x)\seteq |\langle x, a_k\rangle|$ for some $a_k \in \R^n$, and $T=B_2^n$, this lemma recovers Rudelson's
concentration bound for Bernoulli sums of rank-$1$ matrices \cite{Rudelson99b}
(as mentioned there, the inequality we state next is a
consequence of the noncommutative Khintchine inequalities \cite{LPP91}).

Observe that
$N_k(x)^2 = \langle x, a_k\rangle^2 = \langle x, a_k a_k^* x\rangle$, and using $\|\cdot\|_{op}$ to
denote the operator norm, the preceding bound asserts that
\[
   \E \left\|\sum_{k=1}^m \e_k a_k a_k^*\right\|_{op} = \E \max_{x \in B_2^n} \left\langle x, \left(\sum_{k=1}^m \e_k a_k a_k^*\right) x \right\rangle
   \leq O(\slog{(m+n)}) \max_{k \in [m]} \|a_k\| \cdot \left\|\sum_{k=1}^m a_k a_k^*\right\|_{op}^{1/2},
\]
where we use $\lambda \leq O(1) \max_{k \in [m]} \|a_k\|$ and $\kappa \leq O(\slog{m}) \max_{k \in [m]} \|a_k\|$.

When applying \eqref{eq:lem11prev} to hypergraph sparsification, one picks up an additional $\sqrt{\log D}$ factor because each $N_k$
is an $\ell_{\infty}$ norm on a subset of at most $D$ coordinates.

\begin{remark}
As far as we know, it is an open problem to replicate consequences of the noncommutative Khintchine bound for higher-rank matrices using chaining, i.e.,
in the setting where $N_k(x) = \|A_k x\|$ for matrices $A_1,\ldots,A_M$.
\end{remark}

\paragraph{Choosing good conductances}
In order to satisfy properties (1) and (2) above, 
one chooses nonnegative numbers
\[
   \left\{ c_{ij}^e \geq 0 : \{i,j\} \in \textstyle{{e \choose 2}}, e \in E \right\}
\]
for which
\begin{equation}\label{eq:cap-split}
   \sum_{\{i,j\} \in {e \choose 2}} c_{ij}^e = w_e,\qquad \forall e \in E\,.
\end{equation}
Define the edge conductances $c_{ij} \seteq \sum_{e \in E : \{i,j\} \in {e \choose 2}} c_{ij}^e$.
As argued in \pref{sec:choose-con}, any such choice satisfies property (1).

Let $\sfR_{ij}$ denote the effective resistance between $\{i,j\} \in F$ in the weighted graph $G=(V,F,c)$.
To satisfy property (2), it suffices that for all hyperedges $e \in E$,
the effective resistances $\sfR_{ij}$ are the same for all pairs $\{i,j\} \in {e \choose 2}$ with $c_{ij}^e > 0$.
(This continues to hold even if the resistances are only comparable up to universal constant factors.)

Let $J$ denote the all-ones matrix and consider
maximizing the quantity
\[
   \log \det(L_G+J)
\]
over all choices of $(c_{ij}^e)$ satisfying \eqref{eq:cap-split}.
This quantity is a concave function of the conductances $(c_{ij}^e)$ and
the KKT conditions for the maximizer establish the desired property for the effective resistances.
See \pref{sec:balanced}.

This is essentially a reformulation and simplification
of the method used in \cite{KKTY21b} for establishing the existence of nice conductances $c : F \to \R_+$.
It is also reminiscent of Barthe's method for analyzing the Gaussian maximizers of the Brascamp-Lieb (and reverse Brascamp-Lieb) inequalities \cite{Barthe98}
(see also the treatment in \cite{HM13}).

\subsection{Notation}

For two expressions $A$ and $B$, we will use the equivalent notations $A \lesssim B$ and $A \leq O(B)$
to denote that there is a constant
$C > 0$ such that $A \leq C B$.  If $A$ and $B$ depend on some parameters $\alpha_1,\alpha_2,\ldots$, we use the
notation $A \lesssim_{\alpha_1,\alpha_2,\ldots} B$ to denote that there is a number $C = C(\alpha_1,\alpha_2,\ldots)$
such that $A \leq C B$. We use $A \asymp B$ to denote the conjunction of $A \lesssim B$ and $B \lesssim A$.

A number of vector and matrix norms will appear in what follows. When $x \in \R^n$ is a vector, $\|x\|$ will
always refer to the standard Euclidean norm of $x$.
For a positive integer $M \geq 1$, we will sometimes use the notation $[M] \seteq \{1,2,\ldots,M\}$.

\section{Extrema of random processes}

\label{sec:bernoulli}

\subsection{Background on generic chaining}

A space $(T,d)$ is called a {\em $K$-quasimetric} if satisfies
\begin{enumerate}
   \item $d(x,y)=d(y,x)$ for all $x,y \in T$\,.
   \item $d(x,x) = 0$ for all $x \in T$\,.
   \item There is a constant $K > 0$ such that
      \begin{equation*}\label{eq:quasi-metric}
         d(x,y) \leq K \left(d(x,z)+d(z,y)\right),\qquad \forall x,y,z \in T\,.
      \end{equation*}
\end{enumerate}
Say that $(T,d)$ is a {\em quasimetric space} if $(T,d)$ is a $K$-quasimetric for some $K > 0$.

Consider a distance $d$ on $T$.
A random process $\{V_x : x \in T\}$ is said to be {\em subgaussian with respect to $d$}
if there is a number $\alpha > 0$ such that
\begin{equation}\label{eq:subgaussian-tail}
   \Pr\left(|V_x-V_y| > t\right) \leq \exp\left(-\alpha \frac{t^2}{d(x,y)^2}\right),\qquad t > 0\,.
\end{equation}

\paragraph{The generic chaining functional}
For a quasimetric space $(T,d)$, let us recall Talagrand's generic chaining functional \cite[Def. 2.2.19]{TalagrandBook2014}.
Define $N_h \seteq 2^{2^h}$. 
Then
\begin{equation}\label{eq:gamma2}
   \gamma_2(T,d) \seteq \inf_{\{\cA_h\}} \sup_{x \in T} \sum_{h=0}^{\infty} 2^{h/2} \diam_d(\cA_h(x))\,,
\end{equation}
where the infimum runs over all sequences $\{\cA_h : h \geq 0\}$ of partitions of $T$ satisfying $|\cA_h| \leq N_h$
for each $h \geq 0$.
Note that we use the notation $\cA_h(x)$ for the unique set of $\cA_h$ that contains $x$,
and $\diam_d(S) \seteq \sup_{x,y \in S} d(x,y)$ for $S \subseteq T$.
The next theorem constitutes the generic chaining upper bound; see \cite[Thm 2.2.18]{TalagrandBook2014}.

\begin{theorem}
   If $\{V_x : x \in T\}$ is a centered subgaussian process satisfying \eqref{eq:subgaussian-tail}
   with respect to a $K$-quasimetric $(T,d)$, then
\begin{equation}\label{eq:chaining}
   \E \sup_{x \in T} V_x \lesssim_{K,\alpha} \gamma_2(T,d)\,.
\end{equation}
\end{theorem}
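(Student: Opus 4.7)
The plan is to run Talagrand's standard generic chaining argument, tracking carefully the place where the quasi-metric constant $K$ enters. To begin, fix an admissible partition sequence $\{\cA_h\}_{h \geq 0}$ of $T$ with $|\cA_h| \leq N_h$ that achieves the infimum in \eqref{eq:gamma2} up to a factor of $2$; without loss of generality take $\cA_0 = \{T\}$ and assume the partitions are nested, so $\cA_{h+1}$ refines $\cA_h$. Choose representative maps $\pi_h(x) \in \cA_h(x)$; then $\pi_0 \equiv x_0$ is constant, and since the process is centered we have $\E V_{x_0} = 0$, reducing the task to bounding $\E \sup_{x \in T}(V_x - V_{x_0})$. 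Under a standard countability/continuity reduction, the telescoping identity $V_x - V_{x_0} = \sum_{h \geq 0}(V_{\pi_{h+1}(x)} - V_{\pi_h(x)})$ holds pointwise on $T$.

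Next, I would apply the subgaussian tail \eqref{eq:subgaussian-tail} uniformly across all links of all chains. At level $h$, the pair $(\pi_h(x), \pi_{h+1}(x))$ takes at most $|\cA_h| \cdot |\cA_{h+1}| \leq N_h N_{h+1} \leq N_{h+2}$ distinct values as $x$ ranges over $T$. For a deviation parameter $u \geq 1$, setting the threshold $t_{h,a,b} = u \cdot 2^{(h+2)/2}\, d(a,b)/\sqrt{\alpha}$ in \eqref{eq:subgaussian-tail} yields a single-pair failure probability $\exp(-u^2 2^{h+2}) = N_{h+2}^{-u^2}$. A union bound over pairs at level $h$ and then over $h \geq 0$ leaves total failure probability $\lesssim 2^{-u^2}$, and on the complementary event
$|V_{\pi_{h+1}(x)} - V_{\pi_h(x)}| \lesssim \frac{u}{\sqrt{\alpha}} \cdot 2^{h/2} \cdot d(\pi_h(x), \pi_{h+1}(x))$
holds simultaneously for every $x \in T$ and every $h \geq 0$.

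The third step converts pair distances to diameters of partition cells, and this is the only place $K$ enters. Because $x, \pi_h(x) \in \cA_h(x)$ and, by nestedness, $\pi_{h+1}(x) \in \cA_{h+1}(x) \subseteq \cA_h(x)$, a single application of the $K$-quasimetric inequality gives
$d(\pi_h(x), \pi_{h+1}(x)) \leq K\bigl(d(\pi_h(x), x) + d(x, \pi_{h+1}(x))\bigr) \leq 2K \diam_d(\cA_h(x))$.
Summing the bound from Step 2 over $h$, taking the supremum over $x \in T$, and integrating the resulting tail in $u$ from $1$ to $\infty$ yields
$\E \sup_{x \in T}(V_x - V_{x_0}) \lesssim \frac{K}{\sqrt{\alpha}} \sup_{x \in T} \sum_{h \geq 0} 2^{h/2}\, \diam_d(\cA_h(x)) \lesssim \frac{K}{\sqrt{\alpha}}\, \gamma_2(T,d)$,
which is the desired conclusion.

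The main (and really only) technical subtlety is the third step: a naive iteration of the $K$-quasimetric inequality along the chain from $\pi_h(x)$ to $\pi_{h+1}(x)$ through several intermediate points would compound to a factor of $K^h$ and destroy the convergence of the geometric-like sum. The plan is to avoid iteration entirely by exploiting nestedness and routing the single quasi-triangle step through $x$ itself, so that both legs $d(\pi_h(x), x)$ and $d(x, \pi_{h+1}(x))$ sit inside the same partition element $\cA_h(x)$. This localizes the loss to a single multiplicative $K$ per level, which ultimately produces only a global factor of $K$ in the final bound and preserves the shape of the classical metric result.
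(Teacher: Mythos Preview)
The paper does not give its own proof of this theorem; it simply cites \cite[Thm~2.2.18]{TalagrandBook2014} and moves on. Your proposal is the standard generic chaining argument from that reference, adapted to quasimetrics, and it is correct. In particular, your observation that the only place the quasimetric constant enters is the single pivot through $x$ to bound $d(\pi_h(x),\pi_{h+1}(x)) \leq 2K\,\diam_d(\cA_h(x))$---rather than iterating the quasi-triangle inequality along the chain---is exactly the right point, and is why the final bound carries only a single factor of $K$. A couple of minor remarks: the paper's restatement of $\gamma_2$ does not explicitly require the partitions to be nested, but Talagrand's Definition~2.2.19 does, so your assumption is consistent with the cited source (and in any case one can refine to nested partitions at the cost of a constant in the cardinality bound); and your union-bound arithmetic is slightly loose at $u$ near $1$, but the exponent $(\ln 2 - u^2)2^{h+2}$ is already negative for $u \geq 1$, so the sum over $h$ converges and the claimed tail $\lesssim e^{-c u^2}$ holds.
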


Define the entropy numbers $e_h(T,d) \seteq \inf \{ \sup_{t \in T} d(t,T_h) : T_h \subseteq T, |T_h| \leq 2^{2^h} \}$.
This is the infimum of numbers $r > 0$ such that $T$ can be covered by at most $2^{2^h}$ balls of radius $r$.
A classical way of controlling $\gamma_2(T,d)$ is given by Dudley's entropy bound (see, e.g., \cite[Prop 2.2.10]{TalagrandBook2014}):
\begin{equation}\label{eq:dudley}
   \gamma_2(T,d) \lesssim \sum_{h \geq 0} 2^{h/2} e_h(T,d)\,.
\end{equation}

But often additional structure of the space $(T,d)$ allows one to improve on \eqref{eq:dudley}.
The next lemma is a consequence of \cite[Thm 4.1.11 \& (4.23)]{TalagrandBook2014}.
It actually holds whenever $T$ is the unit ball of a uniformly $2$-convex Banach space and $d$ is induced
by some (possibly different) norm.

\begin{theorem}\label{thm:talagrand-uc}
   Suppose that $T=B_2^n$ is the unit Euclidean ball in $\R^n$ and $\|\cdot\|_X$ is a norm on $\R^n$.
   Then,
   \[
      \gamma_2(T,\|\cdot\|_X) \lesssim \left(\sum_{h \geq 0} \left(2^{h/2} e_h(T, \|\cdot\|_X)\right)^2\right)^{1/2}.
   \]
\end{theorem}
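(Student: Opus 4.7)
The plan is to apply Talagrand's chaining theorem for subsets of uniformly $2$-convex spaces, specialized to the case $T=B_2^n$. The geometric input is the parallelogram identity: for $x,y\in B_2^n$ with $\|x-y\|=\delta$, the midpoint $\tfrac{x+y}{2}$ lies at Euclidean distance at most $\sqrt{1-\delta^2/4}$ from the origin, so Euclidean-separated points pin their midpoints strictly inside the ball. This quadratic shrinkage is what converts Dudley's $\ell_1$ bound \eqref{eq:dudley} into the $\ell_2$ bound claimed here.

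First, I would fix near-optimal $\|\cdot\|_X$-covers $T_h\subseteq T$ with $|T_h|\leq N_h$ and covering radius at most $e_h \seteq e_h(T,\|\cdot\|_X)$, together with the induced $\|\cdot\|_X$-Voronoi partitions $\cP_h$, whose pieces have $\|\cdot\|_X$-diameter at most $2 e_h$. Committing to $\cA_h=\cP_h$ and plugging into the definition \eqref{eq:gamma2} reproduces the Dudley bound $\gamma_2(T,\|\cdot\|_X)\lesssim \sum_h 2^{h/2} e_h$. The $\ell_2$ improvement will come from an admissible sequence that refines adaptively across scales, trading Euclidean radius against $\|\cdot\|_X$-diameter.

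Second, I would apply Talagrand's growth-functional machinery (the framework developed in \pref{sec:growth-functionals}). Concretely, define a functional $F$ on subsets $A\subseteq B_2^n$ that upper-bounds the optimal remaining chaining cost below $A$ as a function of the Euclidean radius $r(A)$. The induction step uses $2$-convexity as follows: when $A$ is partitioned using $T_h$ at scale $h$, any sub-piece $A'\subseteq A$ chosen can be re-centered so that its Euclidean radius satisfies $r(A')^2\leq r(A)^2-\delta_h^2$, where $\delta_h\geq 0$ is the Euclidean displacement between the old center and the new one; this is the parallelogram identity applied to the appropriate centroids. Iterating down through the scales yields a Pythagorean-type bound $\sum_h \delta_h^2 \leq r(T)^2 \leq 1$ on the total Euclidean budget expended.

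Finally, one balances the chaining contributions against this Pythagorean bound by a Cauchy--Schwarz trade-off: at each scale the admissible sequence pays $2^{h/2}\cdot 2 e_h$ to the chaining sum, and the inductive bookkeeping (carrying both the chaining cost so far and the Euclidean potential remaining) converts the $\ell_1$ summation $\sum_h 2^{h/2} e_h$ into the desired $\ell_2$ summation $\bigl(\sum_h (2^{h/2} e_h)^2\bigr)^{1/2}$. The main obstacle is producing the precise coupling between the $\|\cdot\|_X$-cover refinement at scale $h$ and the Euclidean displacement $\delta_h$; engineering this alignment so that the parallelogram identity can be invoked cleanly at every scale is the technical core of \cite[Theorem 4.1.11 and (4.23)]{TalagrandBook2014} cited in the statement, and is the only step that genuinely uses $T=B_2^n$ rather than an arbitrary bounded subset of $\R^n$.
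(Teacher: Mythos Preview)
The paper does not prove \pref{thm:talagrand-uc}; it is stated as a black-box consequence of \cite[Thm 4.1.11 \& (4.23)]{TalagrandBook2014}, with no argument given. So there is no ``paper's own proof'' to compare against beyond that citation, and your proposal ultimately defers to the same reference for the technical core.

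That said, as a standalone sketch your proposal has a structural gap. You correctly identify the two ingredients---$2$-uniform convexity of $B_2^n$ via the parallelogram identity, and the growth-functional machinery of \pref{sec:growth-functionals}---but you conflate two different mechanisms. The Dudley-style argument (fix nets $T_h$, take Voronoi partitions $\cP_h$, sum diameters) and the growth-functional argument (show that any $(a,r)$-separated family forces a drop in $F_h$) are not the same construction, and your ``inductive bookkeeping'' paragraph slides between them. In particular, the claim that a sub-piece $A'$ of a $\|\cdot\|_X$-Voronoi cell satisfies $r(A')^2 \leq r(A)^2 - \delta_h^2$ for some Euclidean displacement $\delta_h$ tied to the $\|\cdot\|_X$-scale $e_h$ is exactly the unproved step: the $\|\cdot\|_X$-nets give no a priori control on Euclidean geometry, and the parallelogram identity needs \emph{Euclidean} separation of centers, not $\|\cdot\|_X$-separation. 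Talagrand's actual argument runs the other way: one defines functionals like $F_h(S)$ in terms of Euclidean quantities (e.g., $\inf\{\|u\| : u \in \conv(S)\}$), verifies the growth condition \eqref{eq:no-packing} using $2$-convexity and the dual Sudakov inequality to relate Euclidean packing to $\|\cdot\|_X$-entropy, and then invokes \pref{thm:f-chaining}. You acknowledge this gap yourself in your final paragraph, so what you have written is an accurate high-level map of the proof rather than a proof.
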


In order to bound the entropy numbers $e_h(B_2^n, \|\cdot\|_X)$, we will use the following
classical fact; see, e.g., \cite[(3.15)]{LedouxTalagrand2011}.

\begin{lemma}[Dual Sudakov inequality]
   \label{lem:dual-sudakov}
   Let $B_2^n$ denote the unit Euclidean ball, and suppose that $\|\cdot\|_X$ is a norm on $\R^n$.
   Then
   \[
      e_h(B_2^n,\|\cdot\|_X) \lesssim 2^{-h/2} \E \|g\|_X,
   \]
   where $g$ is a standard $n$-dimensional Gaussian.
\end{lemma}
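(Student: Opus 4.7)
The plan is to follow the classical Pajor--Tomczak-Jaegermann argument, which reduces the entropy-number bound to a direct packing estimate via a one-line comparison with Gaussian measure. Write $M \seteq \E\|g\|_X$ and let $B_X \seteq \{x : \|x\|_X \leq 1\}$. It suffices to show that for every $r > 0$ and every $r$-separated subset $\{x_1,\ldots,x_N\} \subseteq B_2^n$ in the norm $\|\cdot\|_X$, one has $\log N \lesssim M^2/r^2$. Maximality then turns any packing bound into a covering bound (an $r$-net), and setting $2^{2^h} = N$ and solving for $r$ yields $e_h(B_2^n,\|\cdot\|_X) \lesssim 2^{-h/2} M$ as required.

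The key computation is a translation inequality: for any symmetric Borel set $A = -A \subseteq \R^n$ and any $x \in \R^n$,
\[
   \gamma_n(x+A) = e^{-\|x\|^2/2}\int_A e^{-\langle y,x\rangle}\,d\gamma_n(y) \geq e^{-\|x\|^2/2}\gamma_n(A),
\]
where the inequality uses the symmetry of $A$ to replace $e^{-\langle y,x\rangle}$ by $\cosh(\langle y,x\rangle) \geq 1$ after averaging. Because the $x_i$ are $r$-separated, the sets $x_i + (r/2)B_X$ are pairwise disjoint, hence so are $\lambda^{-1}x_i + (r/(2\lambda))B_X$ for any $\lambda > 0$. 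Applying the translation inequality with $\|\lambda^{-1} x_i\| \leq 1/\lambda$ and summing the disjoint-event probabilities $\Pr[\lambda g \in x_i + (r/2)B_X]$ gives
\[
   N \cdot e^{-1/(2\lambda^2)} \cdot \Pr\!\left[\|g\|_X \leq \tfrac{r}{2\lambda}\right] \leq 1.
\]

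To finish, I tune $\lambda = r/(4M)$. Then $r/(2\lambda) = 2M$, so Markov's inequality yields $\Pr[\|g\|_X \leq 2M] \geq 1/2$, whence $\log N \leq O(M^2/r^2)$. I do not foresee a real obstacle; the only subtlety is the Gaussian rescaling by $\lambda$, which is exactly what makes Markov's inequality effective in the regime $r \ll M$ (where the unscaled Gaussian places very little mass in $(r/2)B_X$ and a naive bound on $\gamma_n((r/2)B_X)$ would be useless).
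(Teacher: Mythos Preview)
Your argument is correct; this is exactly the Pajor--Tomczak-Jaegermann proof of the dual Sudakov inequality. The paper does not supply its own proof of this lemma: it is stated as a classical fact with a citation to Ledoux--Talagrand, so there is no in-paper argument to compare against. One small point worth making explicit when you write it up: the packing bound $\log N \leq \log 2 + 8M^2/r^2$ only yields $e_h \lesssim 2^{-h/2} M$ directly for $h \geq 1$; the case $h=0$ (i.e., $\diam_X(B_2^n) \lesssim M$) follows separately from the elementary observation that for any $y^*$ in the dual unit ball $B_{X^*}$ one has $\E\|g\|_X \geq \E|\langle g,y^*\rangle| = \sqrt{2/\pi}\,\|y^*\|$, so $\sup_{\|x\|\leq 1}\|x\|_X \leq \sqrt{\pi/2}\,M$.
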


\begin{corollary}\label{cor:talagrand}
   Suppose $\|\cdot\|_X$ is a norm on $\R^n$, and furthermore
   that $\|\cdot\|_X \leq L \|\cdot\|$ for some $L \geq 1$. Then,
   \[
      \gamma_2(B_2^n,\|\cdot\|_X) \lesssim L + \sqrt{\log n} \E \|g\|_X\,,
   \]
   where $g$ is a standard $n$-dimensional Gaussian.
\end{corollary}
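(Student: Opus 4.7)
The plan is to combine \pref{thm:talagrand-uc} with \pref{lem:dual-sudakov} and a supplementary Euclidean volumetric bound, dividing the chaining sum at a carefully chosen cutoff. Applying \pref{thm:talagrand-uc} to $T = B_2^n$ reduces the problem to controlling $\sum_{h \ge 0} (2^{h/2} e_h)^2$, where $e_h \seteq e_h(B_2^n, \|\cdot\|_X)$. I would then bound each $e_h$ by the smallest of three natural estimates: (i) the diameter bound $e_h \le L$, which follows from the hypothesis $\|\cdot\|_X \le L \|\cdot\|$ since $\diam_X(B_2^n) \le 2L$; (ii) the Dual Sudakov estimate $e_h \lesssim 2^{-h/2} \E \|g\|_X$ from \pref{lem:dual-sudakov}; and (iii) the Euclidean volumetric bound $e_h(B_2^n, \|\cdot\|) \le 3 \cdot 2^{-2^h/n}$, obtained by covering $B_2^n$ with at most $(3/\epsilon)^n$ Euclidean balls, which combined with the hypothesis upgrades to $e_h \le 3L \cdot 2^{-2^h/n}$.

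I would split the sum at the cutoff $H \seteq 2 \lceil \log_2 n \rceil$. The $h = 0$ term is controlled by the diameter bound, contributing $e_0^2 \le L^2$. For $1 \le h \le H$, the Dual Sudakov bound yields $2^h e_h^2 \lesssim (\E \|g\|_X)^2$ per term, so these terms together contribute at most $H \cdot (\E \|g\|_X)^2 \lesssim \log n \cdot (\E \|g\|_X)^2$. For $h > H$, the volumetric bound gives $2^h e_h^2 \lesssim L^2 \cdot 2^h \cdot 4^{-2^h/n}$; since $2^h \ge n^2$ throughout this range, the factor $4^{-2^h/n}$ is at most $4^{-n}$ already at $h = H$ and decays super-geometrically in $h$ thereafter, so the entire tail is bounded by $\lesssim L^2$. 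Summing the three pieces and taking a square root would give the claimed bound $\gamma_2(B_2^n, \|\cdot\|_X) \lesssim L + \sqrt{\log n}\, \E \|g\|_X$.

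The main subtle point is the placement of $H$. With the more obvious choice $H = \lceil \log_2 n \rceil$---where the volumetric estimate only just begins to beat the diameter bound---the tail contribution $\sum_{h > H} 2^h e_h^2$ is of order $L^2 n$, which would degrade the final bound to $L\sqrt{n} + \sqrt{\log n}\, \E \|g\|_X$. Pushing $H$ somewhat further out (doubling it is more than enough, and even $\log_2 n + \log_2 \log n$ would do) brings the leading volumetric term down to something polynomially small in $n$, so that the tail sums to $O(L^2)$ without inflating the Dual Sudakov part by more than a constant factor; this is precisely the step that permits the bare $L$, rather than $L\sqrt{n}$, in the final bound.
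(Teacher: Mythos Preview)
Your proposal is correct and follows essentially the same approach as the paper: apply \pref{thm:talagrand-uc}, use the dual Sudakov inequality \pref{lem:dual-sudakov} for $h$ up to roughly $2\log_2 n$, and use the volumetric estimate $e_h \lesssim L \cdot 2^{-2^h/n}$ to control the tail. The paper packages the low-range contribution slightly differently---it bounds each term $2^{h/2}e_h$ by $S \seteq \sup_h 2^{h/2}e_h$ and then invokes dual Sudakov once to bound $S$, rather than treating $h=0$ separately via the diameter---but this is a cosmetic difference, and your discussion of why the cutoff must be pushed a bit past $\log_2 n$ is exactly the point.
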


\begin{proof}
   A straightforward volume argument shows that any set of $\delta$-separated points in $(B_2^n, \|\cdot\|)$
   must have cardinality at most $(4/\delta)^n$, and therefore
   \[
      e_h(T,\|\cdot\|) \leq 4 \cdot N_h^{-1/n} = 4\cdot 2^{-2^h/n}\,.
   \]
   By assumption, we have $e_h(B_2^n,\|\cdot\|_X) \leq L \cdot e_h(B_2^n, \|\cdot\|)$,
   and therefore
   \[
      e_h(B_2^n,\|\cdot\|_X) \leq 4L \cdot (2^{-2^h/n}).
   \]

   Denote $S \seteq \sup_{h \geq 0} 2^{h/2} e_h(T,\|\cdot\|_X)$.
   Applying \pref{thm:talagrand-uc} yields, for any $h_0 \geq 0$,
   \begin{align*}
      \gamma_2(T,d) \lesssim S \sqrt{h_0} + 4L \left(\sum_{h \geq h_0} (2^{h/2} 2^{-2^h/n})^2\right)^{1/2}\,.
   \end{align*}
   Choosing $h_0 \geq 2 \log n$ bounds the latter sum by $O(1)$, yielding
   \[
      \gamma_2(T,d) \lesssim S \sqrt{\log n} + L\,.
   \]
   To conclude, use \pref{lem:dual-sudakov} to bound $S$.
\end{proof}

\subsection{Warm up}
\label{sec:warmup}

The next lemma will allow us to establish the existence of
hypergraph spectral sparsifiers with at most $O(\e^{-2} n (\log n)^2)$ hyperedges.
It also provides a nice warm up for the more delicate arguments in \pref{sec:advanced}.

Let $A : \R^n \to \R^m$ denote a linear operator.
We use the notation
\[
   \|A\|_{2\to \infty} \seteq \max_{\|x\| \leq 1} \|Ax\|_{\infty}\,.
\]
This is equal to the maximum $\ell_2$ norm of a row of $A$.
Define the norm
\[
   \|x\|_{A} \seteq \|Ax\|_{\infty}\,,
\] 
and let us observe the following.

\begin{lemma}\label{lem:cov1}
   If $g$ is a standard $n$-dimensional Gaussian, it holds that
   \[
      \E \|g\|_A \lesssim \|A\|_{2\to \infty} \sqrt{\log m}\,.
   \]
   In particular, \pref{lem:dual-sudakov} gives
   \[
      e_h(B_2^n, \|\cdot\|_A) \lesssim 2^{-h/2} \sqrt{\log m} \|A\|_{2\to \infty}\,.
   \]
\end{lemma}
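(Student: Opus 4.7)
The plan is to reduce the claim to the classical bound on the expected maximum of a finite collection of centered Gaussians, and then simply substitute into \pref{lem:dual-sudakov}.

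First, I would unpack the norm. Writing $a_1,\ldots,a_m \in \R^n$ for the rows of $A$, we have
\[
   \|g\|_A = \|Ag\|_{\infty} = \max_{i \in [m]} |\langle a_i, g\rangle|\,,
\]
and each $\langle a_i, g\rangle$ is a centered Gaussian with variance $\|a_i\|^2 \le \|A\|_{2 \to \infty}^2$, since $\|A\|_{2\to\infty}$ equals the largest $\ell_2$-norm of a row of $A$.

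Next, I would apply the standard moment-generating-function argument: for any $\lambda > 0$,
\[
   \E \max_{i \in [m]} \langle a_i, g\rangle \le \frac{1}{\lambda} \log \sum_{i=1}^m \E \exp(\lambda \langle a_i, g\rangle) \le \frac{\log m}{\lambda} + \frac{\lambda}{2} \|A\|_{2\to \infty}^2\,,
\]
using $\E\exp(\lambda \langle a_i, g\rangle) \le \exp(\lambda^2 \|a_i\|^2/2)$. Optimizing in $\lambda$ yields a bound of order $\|A\|_{2\to \infty}\sqrt{\log m}$. To handle the absolute value, apply the same argument to the $2m$ centered Gaussians $\pm \langle a_i, g\rangle$; this only inflates $\log m$ to $\log(2m)$, which is still $\lesssim \log m$ (with the usual convention that $\log m$ means $\max(1, \log m)$, or after adjusting constants for small $m$). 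This proves the first inequality.

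Finally, the second statement is an immediate consequence: \pref{lem:dual-sudakov} applied to the norm $\|\cdot\|_A$ gives $e_h(B_2^n, \|\cdot\|_A) \lesssim 2^{-h/2}\,\E\|g\|_A$, and substituting the bound just established produces the claimed estimate. There is no real obstacle here; the only mild subtlety is the passage from $\max_i \langle a_i,g\rangle$ to $\max_i |\langle a_i,g\rangle|$, which is handled by the symmetry trick above.
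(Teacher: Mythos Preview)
Your proposal is correct and follows essentially the same approach as the paper: identify $\|g\|_A = \max_{i \in [m]} |\langle a_i, g\rangle|$ as a maximum of $m$ centered Gaussians with variances at most $\|A\|_{2\to\infty}^2$, invoke the standard $\sqrt{\log m}$ bound, and then plug into \pref{lem:dual-sudakov}. The only difference is that the paper cites the Gaussian maximum bound as a known fact, whereas you spell out the moment-generating-function argument and the $\pm$ symmetry trick; this is extra detail, not a different method.
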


\begin{proof}
   If $a_1,\ldots,a_m$ are the rows of $A$ and $g$ is an $n$-dimensional Gaussian, then
   \[
      \E \|Ag\|_{\infty} = \E \max_{i \in [m]} |\langle g,a_i\rangle| \lesssim \max_{i \in [m]} \|a_i\| \sqrt{\log m}
      = \|A\|_{2 \to \infty} \sqrt{\log m}\,.\qedhere
   \]
\end{proof}

Additionally, let $\f_1,\f_2,\ldots,\f_M : \R^m \to \R$ be arbitrary functions.

\begin{lemma}\label{lem:Esup-warm}
   For any subset $T \subseteq B_2^n$,
   it holds that
   \[
      \E \sup_{x \in T} \sum_{j=1}^M \e_j \f_j(A x)^2 \lesssim \sqrt{\log m \log n}  \left\|A\right\|_{2 \to \infty} \cdot \sup_{\substack{j \in [M], \\ \|z-z'\|_{\infty} \leq 1}} |\f_j(z)-\f_j(z')|
      \cdot \sup_{x \in T} \left(\sum_{j=1}^M \f_j(Ax)^2\right)^{1/2}\,,
   \]
   where $\e_1,\ldots,\e_M$ are i.i.d. Bernoulli $\pm 1$ random variables.
\end{lemma}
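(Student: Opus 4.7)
The plan is to recognize $V_x \seteq \sum_{j=1}^M \e_j \f_j(Ax)^2$ as a centered subgaussian process, apply the generic chaining bound \eqref{eq:chaining}, and evaluate the resulting $\gamma_2$ functional via \pref{cor:talagrand} and \pref{lem:cov1}. Let $\Lambda$ denote the supremum in the statement of the lemma, and set $S \seteq \sup_{x\in T} \left(\sum_{j=1}^M \f_j(Ax)^2\right)^{1/2}$. By Hoeffding's inequality applied to the Bernoulli sum $V_x - V_y = \sum_j \e_j (\f_j(Ax)^2 - \f_j(Ay)^2)$, the process is subgaussian with respect to the (semi)metric
\[
   d(x,y) \seteq \left(\sum_{j=1}^M \left(\f_j(Ax)^2 - \f_j(Ay)^2\right)^2\right)^{1/2},
\]
so the task reduces to producing an estimate of the form $\gamma_2(T,d) \lesssim \Lambda \cdot S \cdot \|A\|_{2\to\infty} \sqrt{\log m \log n}$.

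The core of the argument is a linearization of $d$ in terms of the simpler norm $\|\cdot\|_A$. I will factor
\[
   \f_j(Ax)^2 - \f_j(Ay)^2 = \left(\f_j(Ax)+\f_j(Ay)\right)\left(\f_j(Ax)-\f_j(Ay)\right),
\]
use the Lipschitz-type hypothesis on $\{\f_j\}$ to bound the second factor by $\Lambda \|A(x-y)\|_\infty = \Lambda\|x-y\|_A$ uniformly in $j$, and combine with $(a+b)^2 \leq 2(a^2+b^2)$ and the definition of $S$ to get
\[
   d(x,y)^2 \le \Lambda^2 \|x-y\|_A^2 \sum_{j=1}^M \left(\f_j(Ax)+\f_j(Ay)\right)^2 \le 4 \Lambda^2 S^2 \|x-y\|_A^2.
\]
By positive homogeneity of $\gamma_2$ together with monotonicity in the set (using $T \subseteq B_2^n$), this bounds $\gamma_2(T,d)$ by $\Lambda \cdot S \cdot \gamma_2(B_2^n,\|\cdot\|_A)$ up to a constant.

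To finish, I will use the row-norm bound $\|x\|_A = \|Ax\|_\infty \le \|A\|_{2\to\infty} \|x\|$ and apply \pref{cor:talagrand} with its constant taken as $\|A\|_{2\to\infty}$, together with \pref{lem:cov1} to estimate the Gaussian average $\E\|g\|_A \lesssim \|A\|_{2\to\infty} \sqrt{\log m}$. This produces $\gamma_2(B_2^n,\|\cdot\|_A) \lesssim \|A\|_{2\to\infty} \sqrt{\log m \log n}$, after harmlessly absorbing the lower-order additive term. Composing the inequalities yields the claim. The only genuinely nontrivial move is the telescoping identity $a^2 - b^2 = (a+b)(a-b)$, which is what converts the fourth-order increments of $V_x$ into the product of $\Lambda \cdot S$ with the linear increment $\|x-y\|_A$; once that is in place, the chaining estimate on $(B_2^n,\|\cdot\|_A)$ is immediate from the tools assembled in \pref{sec:bernoulli}.
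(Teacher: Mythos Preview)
Your proposal is correct and follows essentially the same route as the paper: identify the process as subgaussian with respect to $d$, linearize via $a^2-b^2=(a+b)(a-b)$ together with the Lipschitz bound and $(a+b)^2\le 2(a^2+b^2)$ to get $d(x,y)\le 2\Lambda S\,\|x-y\|_A$, then pass to $\gamma_2(B_2^n,\|\cdot\|_A)$ and invoke \pref{cor:talagrand} with \pref{lem:cov1}. The only cosmetic difference is notation ($\Lambda,S$ versus the paper's $\alpha,\beta$) and your explicit mention of Hoeffding for the subgaussian tail.
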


\begin{proof}
   Define
\begin{align}
   \alpha &\seteq \max_{j \in [M]} \sup_{\|z-z'\|_{\infty} \leq 1} |\f_j(z)-\f_j(z')| \,,\label{eq:w1} \\
   \beta &\seteq \sup_{x \in T} \left(\sum_{j=1}^M \f_j(Ax)^2\right)^{1/2},\label{eq:w2} \\
   V_x &\seteq \sum_{j=1}^M \e_j \f_j(Ax)^2\,,\nonumber
\end{align}
and note that $\{V_x : x \in \R^n \}$ is a subgaussian process with respect to the distance
   \[
      d(x,\hat{x}) \seteq \left(\sum_{j=1}^M \left|\f_j(Ax)^2 - \f_j(A\hat{x})^2\right|^2\right)^{1/2}\,.
   \]
   Thus in light of \eqref{eq:chaining}, it suffices to prove that
   \begin{equation}\label{eq:goal-w}
      \gamma_2(T,d) \lesssim \sqrt{\log m \log n} \|A\|_{2\to\infty} \cdot \alpha\beta\,.
   \end{equation}

Note that for $x,\hat{x} \in T$,
\begin{align}
   d(x,\hat{x})^2 
   &= \sum_{j=1}^M \left(\f_j(Ax) - \f_j(A\hat{x})\right)^2
         \left(\f_j(Ax) + \f_j(A\hat{x})\right)^2 \nonumber \\
   &\leq \,2 \sum_{j=1}^M \left(\f_j(Ax) - \f_j(A\hat{x})\right)^2
         \left(\f_j(Ax)^2 + \f_j(A\hat{x})^2\right) \nonumber \\
   &\stackrel{\mathclap{\eqref{eq:w1}}}{\leq} \,2 \alpha^2\,\|A(x-\hat{x})\|_{\infty}^2\sum_{j=1}^M 
         \left(\f_j(Ax)^2 + \f_j(A\hat{x})^2\right) \nonumber \\
   &\stackrel{\mathclap{\eqref{eq:w2}}}{\leq} \,4 \alpha^2\beta^2 \, \|x-\hat{x}\|^2_A\,. \label{eq:last-warm-line}
\end{align} 
In particular, we have
\begin{equation}\label{eq:aineq-1}
   \gamma_2(T, d) \leq 2\alpha\beta \cdot \gamma_2(T, \|\cdot\|_A) \leq 2 \alpha\beta \cdot\gamma_2(B_2^n, \|\cdot\|_A),
\end{equation}
where the last inequality uses $T \subseteq B_2^n$.

   We can thus apply \pref{lem:cov1} and \pref{cor:talagrand} with $\|\cdot\|_X = \|\cdot\|_A$ and $L \seteq \|A\|_{2\to \infty}$ to conclude that
   \[
      \gamma_2(B_2^n, \|\cdot\|_A) \lesssim \|A\|_{2\to \infty} \sqrt{\log m\log n}\,.
   \]
   Combining this with \eqref{eq:aineq-1} completes our verification of \eqref{eq:goal-w}.
\end{proof}

In \pref{sec:advanced}, we will obtain an improved bound by using convexity
in a stronger way. In particular, we will assume that each of the 
functions $\f_j$ in \pref{lem:Esup-warm} is a norm on $\R^m$.

\subsection{Growth functionals}
\label{sec:growth-functionals}

Talagrand introduced a powerful way to control $\gamma_2(T,d)$ via the existence
of certain growth functionals.
For $x \in T$ and $\rho > 0$, define the ball
\begin{equation}\label{eq:balldef}
   B_d(x,\rho) \seteq \{ y \in T : d(x,y) \leq \rho \}\,.
\end{equation}

\begin{definition}[Separated sets]\label{def:ar-separated}
   Let $(T,d)$ denote a metric space and consider numbers $a > 0, r \geq 4$.
   Say that subsets {\em $H_1,\ldots,H_m \subseteq T$ are $(a,r)$-separated} if
   \[
      H_{\ell} \subseteq B_d(x_{\ell}, a/r), \quad \ell=1,\ldots,m\,,
   \]
   where $x_1,\ldots,x_m \in T$ are points satisfying
   \begin{equation}\label{eq:sep-centers}
         a \leq d(x_{\ell},x_{\ell'}) \leq ar,\quad \forall \ell \neq \ell'.
   \end{equation}
\end{definition}

\begin{definition}[The growth condition]\label{def:growth}
   Consider nonnegative functionals $\{F_h : h \geq 0\}$ defined on subsets of a metric space $(T,d)$ and
   which satisfy the following two conditions for every $h \geq 0$:
   \begin{align*}
      F_h(S) &\leq F_h(S'),\qquad \forall S \subseteq S' \subseteq T\,, \\
      F_{h+1}(S) &\leq F_{h}(S),\qquad \forall \ S \subseteq T\,.
   \end{align*}
   Say that such functionals satisfy the {\em growth condition with parameters
   $r \geq 4$ and $c^* > 0$} if for any integer $h \geq 0$ and $a > 0$, the following
   holds true with $m=N_{h+1}$:
   For each collection of subsets $H_1,\ldots,H_m \subseteq T$ that are $(a,r)$-separated,
   we have
   \begin{equation}\label{eq:no-packing}
      F_{h}\left(\bigcup_{\ell \leq m} H_{\ell}\right) \geq c^* a 2^{h/2} + \min_{\ell \leq m} F_{h+1}(H_{\ell})\,.
   \end{equation}
\end{definition}

\begin{theorem}[{\cite[Thm 2.3.16]{TalagrandBook2014}}]
   \label{thm:f-chaining}
   Let $(T,d)$ be a $K$-quasimetric space and consider a sequence of functionals $\{F_h\}$ 
   satisfying the growth condition (cf. \pref{def:growth}) with parameters $r \geq 4$ and $c^* > 0$.
   Then,
   \[
      \gamma_2(T,d) \lesssim_K \frac{r}{c^*} F_0(T) + r\cdot\diam_d(T)\,.
   \]
\end{theorem}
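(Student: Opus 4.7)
The plan is to construct an admissible sequence of partitions $\{\cA_h\}_{h \geq 0}$ of $T$ by greedy inductive refinement and to charge each contribution $2^{h/2}\diam_d(\cA_h(x))$ in the $\gamma_2$-sum \eqref{eq:gamma2} to a proportional drop in the functional $F_h$, so that a telescoping argument along the chain of cells containing $x$ yields the bound. Concretely, I would set $\cA_0 \seteq \{T\}$ and, given $\cA_h$, refine each $A \in \cA_h$ at a scale $a \asymp \diam_d(A)/r$: choose a maximal $(a/r)$-separated set of centers $x_1,\ldots,x_m \in A$; by maximality the balls $H_\ell \seteq A \cap B_d(x_\ell, a/r)$ cover $A$, and the $K$-quasi-metric triangle inequality bounds $\diam_d(H_\ell) \lesssim_K a/r$, so diameters shrink by a factor $\asymp r$ per level. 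Because $\diam_d(A) \leq ar$, the separation upper bound in \eqref{eq:sep-centers} is automatic, so the family $(H_\ell)$ is $(a/r, r)$-separated in the sense of \pref{def:ar-separated}.

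The essential difficulty is enforcing $|\cA_{h+1}| \leq N_{h+1}$: the number $m$ of centers may exceed the budget of children allocated to $A$. This is precisely where the growth condition comes in. If splitting $A$ requires $m \geq N_{h+1}$ pieces, then \eqref{eq:no-packing} applied at scale $a/r$ yields
\[
F_h(A) \;\geq\; c^* (a/r) \, 2^{h/2} + \min_{\ell} F_{h+1}(H_\ell),
\]
so the refinement ``pays for itself'' via an $F$-drop of order $c^* a \, 2^{h/2}/r$. Following any chain $T = A_0 \supseteq A_1 \supseteq \cdots$ with $x \in A_h$, the monotonicity properties in \pref{def:growth} make this telescope, and the cumulative drop along the chain is at most $F_0(T)$, bounding the contribution of these ``complex'' stages to the $\gamma_2$-sum by $O(r F_0(T)/c^*)$.

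The main obstacle I anticipate is the simultaneous bookkeeping of partition cardinalities and geometric scales. One needs a refinement scheme that (i) spends at most the per-cell budget $N_{h+1}/|\cA_h|$ children when the maximal separated set is small, (ii) invokes the growth condition only when local complexity forces it, and (iii) handles the initial stages --- where $F$ has not yet ``kicked in'' because $\diam_d(A_h)$ is still comparable to $\diam_d(T)$ --- separately. The contributions from (iii) form a geometric series in $1/r$, producing the additive $r \cdot \diam_d(T)$ term in the final bound. Technically, (i) and (ii) can be handled by letting each cell decide independently whether to split at scale $\diam_d(A)/r$ based on whether its maximal separated set exceeds its local budget, and invoking \pref{def:growth} exactly when it does. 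The quasi-metric constant $K$ enters only through the diameter estimate for the $H_\ell$ and is absorbed into the $\lesssim_K$ in the conclusion.
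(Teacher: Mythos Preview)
The paper does not prove \pref{thm:f-chaining}; it is quoted from \cite[Thm~2.3.16]{TalagrandBook2014} and used as a black box, so there is no in-paper argument to compare against.

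Your outline has the right architecture---inductive refinement, $F$-drops paying for diameter contributions, telescoping along chains, a geometric series producing the $r\cdot\diam_d(T)$ term---and this is indeed the shape of Talagrand's proof. But there is a genuine gap in the central step. The growth inequality \eqref{eq:no-packing} only asserts
\[
   F_h\Bigl(\,\bigcup_{\ell} H_\ell\Bigr) \;\geq\; c^*\, a\,2^{h/2} + \min_{\ell} F_{h+1}(H_\ell)\,,
\]
so the drop of size $c^* a\,2^{h/2}$ is guaranteed for a \emph{single} child, the minimizer $H_{\ell^*}$. For a point $x \notin H_{\ell^*}$, monotonicity yields only $F_{h+1}(\cA_{h+1}(x)) \leq F_h(\cA_h(x))$ with no quantitative gap at that level, and your telescoping bound $\sum_h 2^{h/2}\diam_d(\cA_h(x)) \lesssim \sum_h \bigl(F_h(\cA_h(x))-F_{h+1}(\cA_{h+1}(x))\bigr)/c^*$ fails for such chains. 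A correct construction must arrange that \emph{every} chain, at each ``complex'' level, either passes through a piece with a certified $F$-drop or has a diameter that is small for an independent reason; organizing this is the substantive content of Talagrand's argument, and it is considerably more intricate than letting each cell split independently according to its local budget.

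There is also a smaller bookkeeping slip: with centers that are $(a/r)$-separated inside a set of diameter $\asymp ar$, and $H_\ell = A\cap B_d(x_\ell,a/r)$, the family is not $(a/r,r)$-separated in the sense of \pref{def:ar-separated}---that would require $H_\ell \subseteq B_d(x_\ell,a/r^2)$ and $d(x_\ell,x_{\ell'}) \leq a$, neither of which holds. You want centers at pairwise distance $\geq a$ so that the $H_\ell$ are $(a,r)$-separated; this also aligns the separation threshold with the $m=N_{h+1}$ hypothesis in \eqref{eq:no-packing}, which your scheme must meet before it can be invoked at all.
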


\begin{remark}[Packing/covering duality]
For the reader encountering \pref{def:growth} and \pref{thm:f-chaining} for the first time,
the role of the functionals $\{F_h\}$ might appear mysterious.
Some intuition can be gained by considering the duality between covering and packing:
A set $S$ in some metric space can be covered by $m$ balls of radius $r > 0$ if it is impossible
to find $m$ points in $S$ that are pairwise separated by distance $r$.

The quantity $\gamma_2(T,d)$ (cf. \eqref{eq:gamma2}) is a sort of multiscale covering functional.
The growth functionals $\{F_h\}$ measure the ``size'' of packings of various cardinalities,
and \eqref{eq:no-packing} asserts a form of packing impossibility.
This makes \pref{thm:f-chaining} a multiscale analog of the simple packing/covering argument
recalled above.

Those familiar with convex optimization and duality may find the approach of \cite{BDOM21}
instructive in this regard. It is shown that the corresponding {\em fractional}
multiscale covering and packing values are equal by convex duality, and then a rounding argument establishes
that the integral versions are equivalent up to constant factors.
\end{remark}

We will use the following corollary of \pref{thm:f-chaining} that simplifies
the construction of functionals if we have a bound on the growth rate of nets in $(T,d)$.

\begin{corollary}\label{cor:f-chaining}
   Let $(T,d)$ be a $K$-quasimetric and
   assume there are numbers $k, L \geq 1$ and $r \geq 4$ such that
   that for every $a > 0$,
   \begin{equation}\label{eq:growth-cond}
      H_1,\ldots,H_m \subseteq T \textrm{ are $(a,r)$-separated} \implies
      m \leq \left(\frac{L}{a}\right)^k.
   \end{equation}
   Let $h_0$ be the largest integer $h \geq 0$ such that
   \begin{equation}\label{eq:hlk}
      2^{2^{h}} \leq 2^{k(h-1)/2}\,.
   \end{equation}
   Consider a sequence of functionals $\{F_0,F_1,\ldots,F_{h_0}\}$
   satisfying the growth condition \eqref{eq:no-packing} with parameters $r$ and $c^* > 0$.
   Then,
   \begin{equation}\label{eq:f-chaining}
      \gamma_2(T,d) \lesssim_K \frac{r}{c^*} F_0(T) + r \left(\diam_d(T) + L\right)\,.
   \end{equation}
\end{corollary}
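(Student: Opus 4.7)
The plan is to reduce \pref{cor:f-chaining} to \pref{thm:f-chaining} by extending the given finite sequence $\{F_0,\ldots,F_{h_0}\}$ to an infinite sequence $\{\hat F_h\}_{h\geq 0}$ that still obeys the growth condition at every level. The key observation is that \pref{eq:growth-cond} constrains any $(a,r)$-separated family of size $m = N_{h+1}$ to satisfy $a \leq L \cdot 2^{-2^{h+1}/k}$, so the right-hand side of the growth condition \pref{eq:no-packing} is bounded by $c^* L \cdot 2^{h/2 - 2^{h+1}/k}$. The definition of $h_0$ is precisely tuned so that this quantity is summable over $h \geq h_0$ to an absolute constant, and the cost of making the growth condition hold beyond $h_0$ can then be absorbed into an additive $O(c^* L)$ shift of $F_0(T)$.

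I will construct a nonincreasing nonnegative sequence $\{C_h\}_{h\geq 0}$ with $C_h \to 0$ by setting $C_h - C_{h+1} := c^* L \cdot 2^{h/2 - 2^{h+1}/k}$ for $h \geq h_0$ and $C_h := C_{h_0}$ for $h \leq h_0$, and then define
\[
   \hat F_h(S) := \begin{cases} F_h(S) + C_h, & h \leq h_0, \\ C_h, & h > h_0. \end{cases}
\]
Monotonicity of $\hat F_h$ in $S$ is inherited from $F_h$ (for $h \leq h_0$) and is trivial for $h > h_0$, while monotonicity in $h$ follows because both $\{F_h\}$ and $\{C_h\}$ are nonincreasing and $F_{h_0} \geq 0$. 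The growth condition for $\hat F_h$ at levels $h < h_0$ reduces to the original hypothesis because the $C_h$-shifts cancel. At $h = h_0$, the gap $\hat F_{h_0}(\cup H_\ell) - \min_\ell \hat F_{h_0+1}(H_\ell)$ equals $F_{h_0}(\cup H_\ell) + (C_{h_0} - C_{h_0+1})$, which is at least $c^* a 2^{h_0/2}$ by nonnegativity of $F_{h_0}$ together with the choice $C_{h_0} - C_{h_0+1} \geq c^* L \cdot 2^{h_0/2 - 2^{h_0+1}/k} \geq c^* a 2^{h_0/2}$. For $h > h_0$, the gap is exactly $C_h - C_{h+1}$ and dominates $c^* a 2^{h/2}$ by construction.

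Invoking \pref{thm:f-chaining} on $\{\hat F_h\}_{h \geq 0}$ then yields
\[
   \gamma_2(T, d) \lesssim_K \frac{r}{c^*} \hat F_0(T) + r \diam_d(T) = \frac{r}{c^*} \left(F_0(T) + C_{h_0}\right) + r \diam_d(T),
\]
so the proof reduces to showing $C_{h_0} \lesssim c^* L$, i.e.,
\[
   S := \sum_{h \geq h_0} 2^{h/2 - 2^{h+1}/k} = O(1).
\]
I expect this arithmetic estimate to be the main (if routine) step of the proof; it is the sole place where the specific threshold \pref{eq:hlk} is invoked. Maximality of $h_0$ yields $2^{h_0+1}/k > h_0/2$, and writing $h = h_0 + j$ the exponent is bounded above by $h_0(1 - 2^j)/2 + j/2$, so the terms decay doubly exponentially in $j$ and the series is bounded by an absolute constant. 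Plugging this bound back in produces the desired inequality $\gamma_2(T,d) \lesssim_K \tfrac{r}{c^*} F_0(T) + r (\diam_d(T) + L)$.
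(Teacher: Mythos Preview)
Your proposal is correct and follows essentially the same approach as the paper: extend the finite sequence of functionals by adding a telescoping ``budget'' $\{C_h\}$ whose increments $C_h - C_{h+1} = c^* L \cdot 2^{h/2 - 2^{h+1}/k}$ dominate the growth requirement at levels $h \geq h_0$ thanks to the packing bound \pref{eq:growth-cond}, and then invoke \pref{thm:f-chaining}. The only cosmetic difference is that for $h > h_0$ the paper retains $F_{h_0}(S)$ in its extended functionals (setting $\tilde F_h(S) = F_{h_0}(S) + C_0 - \sum_{j=h_0+1}^{h} c_j$), whereas you drop it and set $\hat F_h(S) = C_h$; your variant works because $F_{h_0} \geq 0$, and both lead to the same $O(c^* L)$ additive correction to $F_0(T)$.
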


\begin{proof}
   Define the numbers
   \begin{align*}
   c_j &\seteq c^* L \cdot 2^{-2^j/k} 2^{(j-1)/2} \\
   C_0 &\seteq \sum_{j=h_0+1}^{\infty} c_j\,,
   \end{align*}
   and note that $C_0 \lesssim c^* L$, since \eqref{eq:hlk} is violated for every $h \geq h_0 + 1$.

   Define a new family of functionals $\{\tilde{F}_h : h \geq 0\}$ so that for every $S \subseteq T$,
   \begin{align*}
      \tilde{F}_h(S) &\seteq F_h(S) + C_0\,, \quad && h=0,1,\ldots,h_0\,, \\
      \rightspace \tilde{F}_h(S) &\seteq F_{h_0}(S) + C_0 - \sum_{j=h_0+1}^{h} c_j\,, \quad && h > h _0\,. \rightspace\rightspace
   \end{align*}
   By construction, these satisfy the growth condition \pref{def:growth} since for $h \geq h_0$,
   if $H_1,\ldots,H_m \subseteq T$ are $(a,r)$-separated sets with $m=2^{2^{h+1}}$, then
   \[
      \tilde{F}_{h+1}\left(\bigcup_{\ell \leq m} H_{\ell}\right)
      \geq c_{h+1} + \tilde{F}_h\left(\bigcup_{\ell \leq m} H_{\ell}\right)
      \geq c_{h+1} + \min_{\ell \leq m} \tilde{F}_h\left(H_{\ell}\right)
      \geq c^* a 2^{h/2} + \min_{\ell \leq m} \tilde{F}_h\left(H_{\ell}\right),
   \]
   where the last inequality uses the fact that $a \leq L 2^{-2^{h+1}/k}$ from \eqref{eq:growth-cond}.
   Moreover, we have
   \[
      \tilde{F}_0(T) = F_0(T) + C_0 \leq F_0(T) + O(c^* L)\,,
   \]and therefore we can apply
   \pref{thm:f-chaining} to $\{\tilde{F}_h\}$ to complete the proof.
\end{proof}

\subsection{Further exploiting convexity}
\label{sec:advanced}

We will now use the growth functional approach (cf. \pref{sec:growth-functionals})
to prove a more elaborate upper bound under the additional assumption that our summands are derived from norms.
This will allow us in \pref{sec:h-sparse} to find spectral $\e$-sparsifiers with $O\!\left(\frac{\log D}{\e^2} n \log n\right)$ hyperedges.

\smallskip

Let $N_1,N_2,\ldots,N_M$ be norms on $\R^n$ and
define
\begin{align*}
   \kappa &\seteq \E \max_{j \in [M]} N_j(g)\,, \\
   \lambda &\seteq \max_{j \in [M]} \left(\E[N_j(g)^2]\right)^{1/2}\,,
\end{align*}
where $g$ is a standard $n$-dimensional Gaussian.

\begin{lemma}\label{lem:Esup}
   For any $T \subseteq B_2^n$, it holds that
   \[
      \E \sup_{x \in T} \sum_{j=1}^M \e_j N_j(x)^2 \lesssim \left(\lambda \sqrt{\log n} + \kappa\right)
      \cdot \sup_{x \in T} \left(\sum_{j=1}^M N_j(x)^2 \right)^{1/2}\,,
   \]
   where $\e_1,\ldots,\e_M$ are i.i.d. Bernoulli $\pm 1$ random variables.
\end{lemma}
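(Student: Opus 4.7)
The plan is to follow the structure of the warm-up \pref{lem:Esup-warm} but to upgrade the chaining bound by exploiting convexity of the norms $N_j$, invoking the growth-functional framework of \pref{sec:growth-functionals} to replace the rougher $\sqrt{\log M}\cdot L$-type control with the sharper Gaussian quantities $\lambda\sqrt{\log n}$ and $\kappa$.

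\textbf{Step 1 (reduction to a $\gamma_2$ bound).} I repeat the convexity calculation of the warm-up, now using the reverse triangle inequality $|N_j(x)-N_j(\hat x)| \leq N_j(x-\hat x)$ in place of the generic Lipschitz bound of \eqref{eq:w1}. Just as in \eqref{eq:last-warm-line}, this yields, for the subgaussian metric of $V_x = \sum_j \e_j N_j(x)^2$,
\[
d(x,\hat x) := \Big(\sum_{j=1}^M \big(N_j(x)^2-N_j(\hat x)^2\big)^2\Big)^{1/2} \leq 2\beta\, d_N(x,\hat x),
\]
where $d_N(x,\hat x) := \max_j N_j(x-\hat x)$ and $\beta := \sup_{x\in T}\bigl(\sum_j N_j(x)^2\bigr)^{1/2}$ is exactly the right-hand factor in the claim. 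By the generic chaining bound \eqref{eq:chaining} and the hypothesis $T \subseteq B_2^n$, the lemma reduces to the purely geometric estimate $\gamma_2(B_2^n,d_N) \lesssim \lambda\sqrt{\log n} + \kappa$.

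\textbf{Step 2 (applying the growth-functional corollary).} I invoke \pref{cor:f-chaining} on $(B_2^n,d_N)$. The polynomial packing condition \eqref{eq:growth-cond} is verified by a Euclidean volume comparison using $d_N(x,\hat x) \leq L\,\|x-\hat x\|$ with $L := \max_j \|N_j\|_{\mathrm{op}}$: this gives $m \leq (CL/a)^n$, so I take $k=n$. Consequently, the task is further reduced to constructing a sequence of growth functionals $\{F_h\}_{h\leq h_0}$ on subsets of $B_2^n$, where $h_0 \asymp \log(n\log n)$, that satisfies the growth inequality \eqref{eq:no-packing} with base value $F_0(B_2^n) \lesssim \lambda\sqrt{\log n} + \kappa$. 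The residual term $r(\diam_{d_N}(T)+L)$ supplied by \pref{cor:f-chaining} will be absorbed into the leading bound by sharpening the high-scale estimate directly via Euclidean covering, which beats $L$ beyond the threshold $h_0$.

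\textbf{Step 3 (designing the functional).} The functional will take a two-term form,
\[
F_h(S) = c_\kappa\, \kappa + c_\lambda\, \lambda\, \rho_h(S),
\]
where $\rho_h(S)$ is a scale-dependent Gaussian-width quantity adapted to $S$, strictly decreasing in $h$, and normalized so that $\rho_0(B_2^n) \asymp \sqrt{\log n}$. The growth condition is to be verified by a Sudakov-minoration-style argument: given $(a,r)$-separated centers $x_1,\dots,x_m \in T$ with $m = N_{h+1}$, one compares with the Gaussian process $\langle g,\cdot\rangle$ on the $x_\ell$'s to locate an index $j^\star$ that realizes a substantial fraction of the $d_N$-separation, and then invokes Gaussian concentration of $N_{j^\star}(g)$ to extract the required gain $c^\star a\,2^{h/2}$ from the drop $\rho_h - \rho_{h+1}$.

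The main obstacle is calibrating $\rho_h$ so that the growth inequality holds simultaneously in two contrasting regimes: when the $d_N$-separation is realized predominantly by a single norm $N_{j^\star}$ (the per-norm regime, where dual Sudakov applied to $N_{j^\star}$ and the bound $\E N_{j^\star}(g) \leq \lambda$ produce the $\lambda\sqrt{\log n}$ contribution), versus when the separation is distributed across many indices (the joint regime, where dual Sudakov applied to $d_N$ itself and $\E\|g\|_{d_N} = \kappa$ produce the $\kappa$ contribution). I expect the decomposition to rest on choosing $\rho_h$ to drop by $\sim 1/\sqrt{\log n - h}$ in the per-norm range of scales and by $\sim 1$ in the joint range, so that the cumulative drops across all scales telescope to $\rho_0 \asymp \sqrt{\log n}$ while each individual drop matches the Sudakov upper bound on $a\,2^{h/2}$.
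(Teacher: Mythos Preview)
Your Step~1 repeats precisely the warm-up reduction \eqref{eq:aineq-1}: you apply $|N_j(x)-N_j(\hat x)|\le N_j(x-\hat x)$ uniformly, collapse the subgaussian distance to $2\beta\,d_N$, and are left to prove $\gamma_2(B_2^n,d_N)\lesssim \lambda\sqrt{\log n}+\kappa$. The paper explicitly flags this as the step to avoid (see the paragraph preceding the proof of \pref{lem:Esup}): the triangle inequality for $N_j$ must be applied ``only at judiciously chosen points.'' After your reduction, the left side $\gamma_2(B_2^n,d_N)$ depends only on the single norm $d_N=\max_jN_j$, while $\lambda$ depends on the particular decomposition of $d_N$ into $N_j$'s; there is no mechanism in the $d_N$-geometry alone that sees the individual $N_j$'s, so any growth-functional argument phrased purely in $(B_2^n,d_N)$ cannot produce the $\lambda$ term in the way you describe.

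Your Step~3 does not supply one either. Dual Sudakov applied to $d_N$ gives, for $(a,r)$-separated centers $x_1,\dots,x_m\in B_2^n$ with $m=N_{h+1}$, only $a\,2^{h/2}\lesssim\kappa$; this is exactly the warm-up growth rate and leads back to $\kappa\sqrt{\log n}$. The suggestion to ``locate an index $j^\star$ realizing a substantial fraction of the $d_N$-separation'' has no basis: different pairs $(\ell,\ell')$ may witness $\max_jN_j(x_\ell-x_{\ell'})\ge a$ at entirely different $j$'s, and no pigeonhole forces a common $j^\star$ when $M$ is large.

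What the paper actually does is retain a finer quasimetric
\[
d(x,\hat x)=\Bigl(\sum_jN_j(x-\hat x)^2\bigl(N_j(x)^2+N_j(\hat x)^2\bigr)\Bigr)^{1/2},
\]
and use a convex-hull growth functional $F_h(S)=2-\inf_{u\in\conv(S)}\bigl(\|u\|^2+\sum_jN_j(u)^2\bigr)+\frac{h_0+1-h}{\log n}$. The key steps are \pref{lem:uc2}, which uses $2$-uniform convexity of the Euclidean part to show that small drop in $F$ forces the analytic centers to be close in the distance $\Delta(x,y)^2=\|x-y\|^2+\sum_j(N_j(x)-N_j(y))^2$, and \pref{lem:dist-norms}, which splits $d(x,\hat x)^2$ into a $\|\cdot\|_{\cN}$-term and a $u$-dependent term $\|x-\hat x\|_{\cE(u)}^2=\sum_jN_j(x-\hat x)^2N_j(u)^2$. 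It is the second norm, with $\E\|g\|_{\cE(u)}\le\lambda$ for $u\in T$ (\pref{lem:cov2}), that produces the $\lambda\sqrt{\log n}$ term; this norm simply does not appear once you have passed to $d_N$.
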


Before proving the lemma, let us illustrate a corollary that we will use 
to construct hypergraph sparsifiers.
Consider a linear operator $A : \R^n \to \R^m$, and
suppose that each $N_i$ is a (weighted) $\ell_{\infty}$ norm on some subset $S_i \subseteq [m]$
of the coordinates:
\begin{equation}\label{eq:norm-def}
   N_i(z) = \max_{j \in S_i} w_j |(Az)_j|\,,\qquad w \in [0,1]^{S_i}\,.
\end{equation}
Let $a_1,\ldots,a_m$ denote the rows of $A$, and observe that $(Ag)_j = \langle a_j,g\rangle$
is a normal random variable with variance $\|a_j\|^2$, and therefore
\[
   \E[N_i(g)]^2 = \max_{j \in S_i} w_j^2 |\langle a_j,g\rangle|^2 \lesssim \max_{j \in S_i} \|a_j\|^2 \cdot \sqrt{\log |S_i|}\,.
\]
Similarly, we have 
\[
   \kappa = \E \max_{i \in [M]} \max_{j \in S_i} w_j^2 |\langle a_j,g\rangle|^2 \leq \E \max_{i \in [m]} |\langle a_i,g\rangle|^2 \lesssim \|A\|_{2\to\infty} \sqrt{\log m}\,.
\]

\begin{corollary}\label{cor:lp-version}
   If the norms $N_1,\ldots,N_M$ are of the form \eqref{eq:norm-def} for some 
   $A : \R^n \to \R^m$ and
   subsets $S_1,\ldots,S_M \subseteq [m]$ with $\max_{i \in [M]} |S_i| \leq D$, then
   for any $T \subseteq B_2^n$, it holds that
   \[
      \E \sup_{x \in T} \sum_{j=1}^M \e_j N_j(x)^2 \lesssim \|A\|_{2\to\infty} \sqrt{\log (m+n) \log D}
      \cdot \sup_{x \in T} \left(\sum_{j=1}^M N_j(x)^2 \right)^{1/2}\,,
   \]
   where $\e_1,\ldots,\e_M$ are i.i.d. Bernoulli $\pm 1$ random variables.
\end{corollary}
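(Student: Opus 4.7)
The plan is to apply \pref{lem:Esup} directly, using the specific $\ell_\infty$ structure of the norms in \eqref{eq:norm-def} to estimate the two Gaussian functionals $\lambda$ and $\kappa$ appearing there. Both bounds reduce to elementary Gaussian maximal inequalities, and then the corollary follows by substitution into \pref{lem:Esup} and simplification. In particular, the $\sqrt{\log D}$ factor in the conclusion will arise from the $\ell_\infty$ structure, which lets us bound $\lambda$ in terms of $\log D$ (rather than the much larger $\log m$).

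For $\lambda$, let $a_1,\ldots,a_m$ denote the rows of $A$ and write $N_i(g)^2 = \max_{j \in S_i} w_j^2 \langle a_j,g\rangle^2$. Each $\langle a_j, g\rangle$ is a centered Gaussian of variance $\|a_j\|^2 \leq \|A\|_{2\to\infty}^2$, and the weights $w_j \in [0,1]$ can only decrease things. Using the standard fact that the expected squared maximum of $k$ centered Gaussians of variance at most $\sigma^2$ is $O(\sigma^2 \log k)$, together with $|S_i| \leq D$, one concludes
\[
\lambda^2 = \max_{i \in [M]} \E[N_i(g)^2] \;\lesssim\; \|A\|_{2\to\infty}^2 \log D.
\]

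For $\kappa$, observe that $\max_{i \in [M]} N_i(g) \leq \max_{j \in [m]} |\langle a_j, g\rangle|$, since $S_i \subseteq [m]$ and $w_j \leq 1$. Another standard Gaussian maximal inequality gives $\kappa \leq \E \max_{j \in [m]} |\langle a_j,g\rangle| \lesssim \|A\|_{2\to\infty} \sqrt{\log m}$. Plugging both estimates into \pref{lem:Esup} produces the prefactor
\[
\lambda \sqrt{\log n} + \kappa \;\lesssim\; \|A\|_{2\to\infty}\left(\sqrt{\log D \cdot \log n} + \sqrt{\log m}\right) \;\lesssim\; \|A\|_{2\to\infty} \sqrt{\log D \cdot \log(m+n)},
\]
where the last step uses $\log m \leq \log(m+n)$ and (absorbing a constant) $\log D \gtrsim 1$ for $D \geq 2$, which is the only interesting regime.

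There is no substantive obstacle here: \pref{lem:Esup} already performs the delicate chaining via growth functionals, and all that remains is to exploit the $\ell_\infty$ form of each $N_i$ to extract the claimed savings. The trivial case $D = 1$ reduces the sum to Bernoulli combinations of squared linear functionals and is subsumed either by a simpler bound or by reading $\log D$ as $\max(\log D, 1)$.
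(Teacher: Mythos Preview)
Your proposal is correct and follows essentially the same argument as the paper: bound $\lambda$ via the Gaussian maximal inequality over the at most $D$ coordinates in each $S_i$, bound $\kappa$ via the maximal inequality over all $m$ rows of $A$, and substitute both into \pref{lem:Esup}. Your write-up is in fact slightly cleaner than the paper's in-text derivation preceding the corollary, which contains a minor typo (a $\sqrt{\log |S_i|}$ that should be $\log |S_i|$).
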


The proof of \pref{lem:Esup} is modeled after arguments
of Rudelson \cite{Rudelson99} and Talagrand; see \cite[\S 16.7]{TalagrandBook2014} and
the historical notes in \cite[\S 16.10]{TalagrandBook2014}.
A version of the latter argument first appeared in \cite{Rudelson99}, as a simplification of Rudelson's original construction
of an explicit majorizing measure.
In the proof of \cite[Prop 16.7.4]{TalagrandBook2014},
one encounters growth functionals of the form $F(S) = 1 - \inf \{ \|u\| : u \in \conv(S) \}$, where $\|\cdot\|$
is a uniformly $2$-convex norm. We recall this definition.

\begin{definition}[Uniform $p$-convexity]
   A Banach space $Z$ is called {\em uniformly $p$-convex}
   if there is a number $\eta > 0$ such that for all $x,y \in Z$ with $\|x\|_Z,\|y\|_Z \leq 1$,
   \[
      \left\|\frac{x+y}{2}\right\|_Z \leq 1 - \eta \|x-y\|_Z^p\,.
   \]
\end{definition}

\smallskip

We remark that the statement of \pref{lem:Esup} actually holds when $T$ is a subset of the unit ball of
any uniformly $2$-convex norm on $\R^n$ (with an implicit constant that depends on $\eta$).

We will instead employ functionals of the form 
\[
   F(S) = 2 - \inf \left\{ \|u\|^2 + \sum_{j=1}^M N_j(u)^2 : u \in \conv(S) \right\}.
\]
Problematically, the norm $u \mapsto \left(\|u\|^2 + \sum_{j=1}^M N_j(u)^2\right)^{1/2}$ is potentially very far from uniformly
$2$-convex,
thus we have to be careful in using only $2$-convexity of the Euclidean norm,
along with $2$-convexity of the ``outer'' $\ell_2$ norm of the $N_j$'s.
This requires application of the inequality $|N_j(x)-N_j(\hat{x})| \leq N_j(x-\hat{x})$
only at judiciously chosen points in the argument.
We offer some further explanation in \pref{rem:discussion} after the proof.

\begin{proof}[Proof of \pref{lem:Esup}]
   For a set $S \subseteq \R^n$, let $\conv(S)$ denote the closed convex hull of $S$.
   Note that by convexity,
   \[
      \sup_{x \in T} \left(\sum_{j=1}^M N_j(x)^2\right)^{1/2} = \sup_{x \in \conv(T)} \left(\sum_{j=1}^M N_j(x)^2\right)^{1/2}.
   \]
   Therefore we may replace $T$ by $\conv(T)$ and henceforth assume that $T$ is compact and convex.

   \smallskip

   By scaling $\{N_j\}$, we may assume that
   \begin{align}
      \sup_{x \in T} \sum_{j=1}^M N_j(x)^2 &= 1\,. \label{eq:opnorm1}
   \end{align}

Define $V_x \seteq \sum_{j=1}^M \e_j N_j(x)^2$.
Then $\{V_x : x \in \R^n\}$ is a subgaussian process with respect to the metric
\[
   \tilde{d}(x,\hat{x}) \seteq \left(\sum_{j=1}^M |N_j(x)^2 - N_j(\hat{x})^2|^2\right)^{1/2},
\]
therefore from \eqref{eq:chaining}, we have
\begin{equation}\label{eq:chaining0}
   \E \sup_{x \in T} V_x \lesssim \gamma_2(T,\tilde{d})\,.
\end{equation}

\paragraph{Passing to a nicer distance}
Define the related distance
\begin{align*}
d(x,\hat{x}) &\seteq \left(\sum_{j=1}^M N_j(x-\hat{x})^2 \left(N_j(x)^2 + N_j(\hat{x})^2\right)\right)^{1/2},
\end{align*}
and note that for all $x,\hat{x} \in \R^n$,
\begin{align*}
   \tilde{d}(x,\hat{x})^2 &= \sum_{j=1}^M \left(N_j(x)-N_j(\hat{x})\right)^2 \left(N_j(x) + N_j(\hat{x})\right)^2 \nonumber \\
                          &\leq 2 \sum_{j=1}^M N_j(x-\hat{x})^2 \left(N_j(x)^2 + N_j(\hat{x})^2\right) = 2\, d(x,\hat{x})^2\,. \label{eq:basic-compare}
\end{align*}

We will observe momentarily that
\begin{equation}\label{eq:real-qm}
   d(x,\hat{x}) \leq 2\sqrt{2} \left(d(x,y)+d(y,\hat{x})\right),\qquad \forall x,\hat{x},y \in \R^n\,.
\end{equation}
Since $\tilde{d} \leq \sqrt{2} d$ and $d$ is a quasimetric, \eqref{eq:chaining} gives
\begin{equation*}\label{eq:chaining1}
   \E \sup_{x \in T} V_x \lesssim \gamma_2(T,d)\,,
\end{equation*}
and thus our goal is to establish that
\begin{equation}\label{eq:goal}
   \gamma_2(T,d) \lesssim \lambda \sqrt{\log n} + \kappa\,.
\end{equation}

\begin{lemma}\label{lem:simple-qm}
   For any metric space $(X,D)$ and $x_0 \in X$, it holds that
   the distance
   \[
      \tilde{D}(x,\hat{x}) \seteq D(x,\hat{x}) \left(D(x,x_0)+D(\hat{x},x_0)\right)
   \]
   is a $2$-quasimetric.
\end{lemma}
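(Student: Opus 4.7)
The plan is to verify the three axioms defining a $K$-quasimetric, with $K=2$, for $\tilde D$. Symmetry is immediate since $D(x,\hat x) = D(\hat x,x)$ and $D(x,x_0)+D(\hat x,x_0)$ is manifestly symmetric in $x,\hat x$, and $\tilde D(x,x)=0$ follows from $D(x,x)=0$. So the entire content of the lemma is the relaxed triangle inequality
\[
   \tilde D(x,\hat x) \;\leq\; 2\bigl(\tilde D(x,y) + \tilde D(y,\hat x)\bigr),\qquad \forall x,\hat x,y \in X.
\]

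To prove it, I would abbreviate $a \seteq D(x,y)$, $b \seteq D(y,\hat x)$, $c \seteq D(x,\hat x)$, $p \seteq D(x,x_0)$, $q \seteq D(y,x_0)$, $r \seteq D(\hat x,x_0)$, so that the goal reads $c(p+r) \leq 2a(p+q) + 2b(q+r)$. There are three triangle inequalities at my disposal: $c \leq a+b$, $p \leq a+q$, and $r \leq b+q$. I would apply them in a specific order. First, $c \leq a+b$ gives $c(p+r) \leq ap + bp + ar + br$; next, bounding the cross terms by routing $p$ and $r$ through $y$ (that is, using $p \leq a+q$ on $bp$ and $r \leq b+q$ on $ar$) yields
\[
   c(p+r) \;\leq\; ap + br + aq + bq + 2ab.
\]

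Comparing with the target $2ap + 2aq + 2bq + 2br$, the inequality reduces to the clean statement
\[
   2ab \;\leq\; a(p+q) + b(q+r).
\]
This follows from the reverse-direction triangle inequalities $p+q \geq a$ and $q+r \geq b$ (applied via the paths $x \to y \to x_0$ and $\hat x \to y \to x_0$), which yield $a(p+q)+b(q+r) \geq a^2+b^2$, and then AM--GM gives $a^2+b^2 \geq 2ab$. Combining these closes the argument.

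The only subtle point is the bookkeeping: it is essential to route the cross terms $bp,ar$ through $y$ rather than bounding them crudely (e.g., using $c \leq p+r$ or $|p-r| \leq c$), because any looser substitution produces slack that scales with $(a+b)^2$ rather than $2ab$, which cannot be absorbed by the AM--GM step and would force a larger constant than $2$. This is the whole difficulty; there is no real obstacle beyond choosing the right sequence of triangle inequalities.
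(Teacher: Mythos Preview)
Your proof is correct and follows essentially the same route as the paper's: both expand via $c \leq a+b$, reroute the cross terms $ar,bp$ through $y$ using $r \leq b+q$ and $p \leq a+q$, and then finish with $2ab \leq a^2+b^2 \leq a(p+q)+b(q+r)$ via the reverse triangle inequalities. The only cosmetic difference is that the paper writes the intermediate bound as $\tilde D(x,y)+\tilde D(\hat x,y)+2D(x,y)D(\hat x,y)$ before absorbing the $2ab$ term, whereas you phrase the same step as a comparison with the target $2a(p+q)+2b(q+r)$.
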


\begin{proof}
   Define $\psi(x) \seteq D(x,x_0)$ and
   consider $x,\hat{x},y \in X$.
   Then,
   \begin{align*}
      \tilde{D}(x,\hat{x}) &\leq (D(x,y) + D(\hat{x},y)) \left(\psi(x)+\psi(\hat{x})\right) \\
                   &\leq D(x,y) \left(\psi(x) + \psi(y) + D(\hat{x},y)\right) + D(\hat{x},y) \left(\psi(\hat{x}) + \psi(y) + D(x,y)\right) \\
                   &\leq \tilde{D}(x,y) + \tilde{D}(\hat{x},y) + 2 D(x,y) D(\hat{x},y)\,.
   \end{align*}
   Now use $2 D(x,y) D(\hat{x},y) \leq D(x,y)^2 + D(\hat{x},y)^2 \leq \tilde{D}(x,y) + \tilde{D}(\hat{x},y)$, completing the proof.
\end{proof}

Applying the preceding lemma
with $D(x,\hat{x}) = N_j(x-\hat{x})$ and $x_0=0$
shows that the distance $(x,\hat{x}) \mapsto N_j(x-\hat{x}) (N_j(x)+N_j(\hat{x})^2)^{1/2}$ is a $2\sqrt{2}$-quasimetric for each $j=1,\ldots,M$,
and therefore $d$ is a $2\sqrt{2}$-quasimetric on $\R^n$, verifying \eqref{eq:real-qm}.

\paragraph{Balls in $(\R^n,d)$ are approximately convex}
Recall the definition of the balls $B_d(x,\rho)$ from \eqref{eq:balldef}.

\begin{lemma}\label{lem:approx-convex}
   For any $x \in \R^n$ and $\rho > 0$, it holds that
   \[
      \conv(B_d(x,\rho)) \subseteq B_d(x,4\rho)\,.
   \]
\end{lemma}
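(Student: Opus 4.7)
The plan is to verify the containment pointwise: given a finite convex combination $y = \sum_i \alpha_i y_i$ with each $y_i \in B_d(x,\rho)$, I will show $d(x,y) \leq 4\rho$; continuity of $d$ and closedness of $B_d(x,\rho)$ then extend this to the closed convex hull.

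The first step is to reduce the quantities $N_j(x-y)^2$ and $N_j(x)^2 + N_j(y)^2$ that appear in $d(x,y)^2$ to convex combinations of values at the $y_i$'s. Since each $N_j$ is a norm, subadditivity combined with Jensen's inequality (applied to $t \mapsto t^2$) yields $N_j(x-y)^2 \leq \sum_i \alpha_i N_j(x-y_i)^2$. A parallel computation using $N_j(y_i) \leq N_j(x) + N_j(y_i - x)$ and $(a+b)^2 \leq 2a^2 + 2b^2$ gives the bound $N_j(x)^2 + N_j(y)^2 \leq 3 N_j(x)^2 + 2\sum_i \alpha_i N_j(y_i - x)^2$.

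Substituting these into the definition of $d(x,y)^2$ and expanding produces two pieces. The first, coming from the $3 N_j(x)^2$ factor, is a positive combination of the sums $\sum_j N_j(x)^2 N_j(x-y_i)^2$, each of which is at most $d(x,y_i)^2 \leq \rho^2$ by definition of $B_d(x,\rho)$; hence this piece contributes at most $3\rho^2$. The second piece collects cross terms of the form $\sum_j N_j(x-y_i)^2 N_j(x-y_k)^2$, which do not appear directly in any single $d(x,y_i)^2$, and I expect this to be the main obstacle.

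To handle the cross terms, the idea is to use the crude triangle-inequality bound $N_j(x-y_i)^2 \leq 2(N_j(x)^2 + N_j(y_i)^2)$ to deduce $\sum_j N_j(x-y_i)^4 \leq 2 d(x,y_i)^2 \leq 2\rho^2$, and then to apply Cauchy-Schwarz on the sum over $j$ to bound each cross sum by $2\rho^2$. Weighted summation over $i,k$ with the $\alpha_i \alpha_k$ then keeps this piece at most $4\rho^2$, so altogether $d(x,y)^2 \leq 7\rho^2 < (4\rho)^2$. The factor of $4$ in the statement thus comes from the Cauchy-Schwarz step on the cross terms, and any slack in that step is what allows for a clean constant.
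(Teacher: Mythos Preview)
Your proof is correct. Both your argument and the paper's rest on the same two estimates for $y_i \in B_d(x,\rho)$: namely $\sum_j N_j(x-y_i)^2 N_j(x)^2 \leq \rho^2$ and $\sum_j N_j(x-y_i)^4 \leq 2\rho^2$. The difference is organizational. The paper observes that the maps
\[
   y \mapsto \Bigl(\sum_j N_j(x-y)^2 N_j(x)^2\Bigr)^{1/2}
   \quad\text{and}\quad
   y \mapsto \Bigl(\sum_j N_j(x-y)^4\Bigr)^{1/4}
\]
are themselves convex, so their bounds extend immediately to $\conv(B_d(x,\rho))$; it then recombines via $N_j(x)^2 + N_j(y)^2 \leq 4N_j(x)^2 + 2N_j(x-y)^2$ to get $d(x,y) \leq 4\rho$ on the nose. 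You instead push Jensen inside the sum first, which spawns the cross terms $\sum_j N_j(x-y_i)^2 N_j(x-y_k)^2$, and then Cauchy--Schwarz reduces these back to the $\ell_4$ bound. Your route is slightly more hands-on but in fact yields the sharper constant $\sqrt{7}$ in place of $4$; the paper's route has the advantage of never introducing the cross terms at all, making the convexity structure more transparent.
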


\begin{proof}
   For $y \in B_d(x,\rho)$, we have
   \begin{equation}\label{eq:bnd1}
      \left(\sum_{j=1}^M N_j(x-y)^2 N_j(x)^2\right)^{1/2} \leq \rho\,,
   \end{equation}
   as well as
   \begin{equation}
      \sqrt{\rho} \geq d(x,y)^{1/2} = \left(\sum_{j=1}^M N_j(x-y)^2 \left(N_j(x)^2+N_j(y)^2\right)\right)^{1/4} \geq \left(\frac12 \sum_{j=1}^M N_j(x-y)^4\right)^{1/4},\label{eq:bnd2}
   \end{equation}
   where the final inequality uses $N_j(x-y) \leq N_j(x)+N_j(y)$.
   Since the left-hand side of \eqref{eq:bnd1} and the right-hand side of \eqref{eq:bnd2} are both convex functions of $y$, these inequalities remain true
   for all $y \in \conv(B_d(x,\rho))$.

   In particular, for any $y \in \conv(B_d(x,\rho))$, we can use $a^2+b^2 \leq 4a^2 + 2(a-b)^2$ to write
   \begin{align*}
      d(x,y) &\leq \left(\sum_{j=1}^M N_j(x-y)^2 \left(4 N_j(x)^2 + 2 (N_j(x)-N_j(y))^2\right)\right)^{1/2} \\
             &\leq 2 \left(\sum_{j=1}^M N_j(x-y)^2 N_j(x)^2\right)^{1/2} + \sqrt{2} \left(\sum_{j=1}^M N_j(x-y)^4\right)^{1/2} \leq 4 \rho\,.\qedhere
   \end{align*}
\end{proof}

\paragraph{Covering estimates}
Define now the following norms on $\R^n$:
\begin{align*}
   \|x\|_{\cN} &\seteq \max_{j \in [M]} N_j(x)\,, \\
   \|x\|_{\cE(u)} &\seteq \left(\sum_{j=1}^M N_j(x)^2 N_j(u)^2\right)^{1/2}, \quad u \in \R^n\,.
\end{align*}

\begin{lemma}\label{lem:dist-norms}
   For all $x,\hat{x},u \in \R^n$,
   \begin{align*}
      d(x,\hat{x})^2 &\leq
      2\, \|x-\hat{x}\|_{\cN}^2 \left(\sum_{j=1}^M \left(N_j(x)-N_j(u)\right)^2+\sum_{j=1}^M \left(N_j(\hat{x})-N_j(u)\right)^2\right) + 4 \|x-\hat{x}\|_{\cE(u)}^2\,.
   \end{align*}
\end{lemma}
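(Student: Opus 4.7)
The plan is to decompose $N_j(x)^2 + N_j(\hat x)^2$ around the reference point $u$ using the elementary inequality $a^2 \leq 2b^2 + 2(a-b)^2$, applied with $a = N_j(x)$ and $b = N_j(u)$ (and likewise for $\hat x$). This yields the pointwise bound
\[
   N_j(x)^2 + N_j(\hat x)^2 \leq 4\, N_j(u)^2 + 2 \left(N_j(x)-N_j(u)\right)^2 + 2 \left(N_j(\hat x)-N_j(u)\right)^2\,.
\]
Substituting this into the definition
\[
   d(x,\hat x)^2 = \sum_{j=1}^M N_j(x-\hat x)^2 \left(N_j(x)^2 + N_j(\hat x)^2\right)
\]
splits the sum into two pieces: one weighted by $N_j(u)^2$, and one weighted by $(N_j(x)-N_j(u))^2 + (N_j(\hat x)-N_j(u))^2$.

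The first piece is, by definition, exactly
\[
   4 \sum_{j=1}^M N_j(x-\hat x)^2\, N_j(u)^2 = 4 \|x-\hat x\|_{\cE(u)}^2\,,
\]
so it produces the $\cE(u)$-term on the right-hand side with the correct constant. For the second piece, I would use the crude bound $N_j(x-\hat x) \leq \max_{i \in [M]} N_i(x-\hat x) = \|x-\hat x\|_{\cN}$ valid for every $j$, and pull this factor out of the sum. The remaining factor is precisely
\[
   \sum_{j=1}^M \left(N_j(x)-N_j(u)\right)^2 + \sum_{j=1}^M \left(N_j(\hat x)-N_j(u)\right)^2\,,
\]
producing the $\cN$-term with the right constant of $2$.

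There is no real obstacle here; the only choice to make is the centering constant $2$ in $a^2 \leq 2b^2 + 2(a-b)^2$ (versus a looser $(1+\eta)b^2 + (1+1/\eta)(a-b)^2$), which must be tuned so that the resulting constants match the statement. A noteworthy point is that we deliberately do \emph{not} invoke the reverse triangle inequality $|N_j(x) - N_j(u)| \leq N_j(x-u)$ at this stage: keeping the discrepancies $N_j(x) - N_j(u)$ unexpanded is what will later let us exploit uniform $2$-convexity of the outer $\ell_2$ norm in the growth-functional argument, as flagged in the discussion preceding the lemma.
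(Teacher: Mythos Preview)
Your proposal is correct and essentially identical to the paper's proof: both apply the elementary inequality $a^2 \leq 2(a-b)^2 + 2b^2$ with $a=N_j(x)$ (respectively $N_j(\hat{x})$) and $b=N_j(u)$, then bound $N_j(x-\hat{x}) \leq \|x-\hat{x}\|_{\cN}$ on the discrepancy terms. The only cosmetic difference is that the paper treats the $N_j(x)^2$ and $N_j(\hat{x})^2$ halves of $d(x,\hat{x})^2$ separately before adding, while you combine them first.
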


\begin{proof}
   Use the inequalities
   \begin{align}
      N_j(x)^2 \leq 2 (N_j(x)-N_j(u))^2 + 2N_j(u)^2\,, \qquad x,u \in \R^n\nonumber
   \end{align}
   to write
   \begin{align*}
      \sum_{j=1}^M N_j(x-\hat{x})^2 N_j(x)^2
                                    & \leq 2 \|x-\hat{x}\|_{\cN}^2 \sum_{j=1}^M \left(N_j(x)-N_j(u)\right)^2 +
      2 \sum_{j=1}^M N_j(x-\hat{x})^2 N_j(u)^2 \\
                                    &=  2 \|x-\hat{x}\|_{\cN}^2 \sum_{j=1}^M \left(N_j(x)-N_j(u)\right)^2 +
      2 \|x-\hat{x}\|^2_{\cE(u)}\,.\qedhere
   \end{align*}
\end{proof}

\begin{lemma}\label{lem:cov2}
   It holds that 
   \begin{align*}
      e_h(B_2^n, \|\cdot\|_{\cN}) &\lesssim 2^{-h/2} \kappa\,,\\
      e_h(B_2^n, \|\cdot\|_{\cE(u)}) &\lesssim 2^{-h/2} \lambda\,,\quad \forall u \in T\,.
   \end{align*}
\end{lemma}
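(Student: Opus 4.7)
My plan is to apply the Dual Sudakov inequality (\pref{lem:dual-sudakov}) to both norms and then bound the corresponding expected norms of a standard Gaussian.

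For the first bound, the Dual Sudakov inequality applied to the norm $\|\cdot\|_{\cN}$ gives
\[
   e_h(B_2^n, \|\cdot\|_{\cN}) \lesssim 2^{-h/2}\, \E \|g\|_{\cN} = 2^{-h/2}\, \E \max_{j \in [M]} N_j(g) = 2^{-h/2}\, \kappa\,,
\]
directly from the definitions of $\|\cdot\|_{\cN}$ and $\kappa$.

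For the second bound, I would again invoke the Dual Sudakov inequality, now applied to $\|\cdot\|_{\cE(u)}$, reducing the problem to bounding $\E \|g\|_{\cE(u)}$ by $O(\lambda)$ uniformly over $u \in T$. The key step is Jensen's inequality, followed by linearity of expectation:
\[
   \E \|g\|_{\cE(u)} \leq \left(\E \|g\|_{\cE(u)}^2\right)^{1/2} = \left(\sum_{j=1}^M N_j(u)^2 \,\E[N_j(g)^2]\right)^{1/2} \leq \lambda \left(\sum_{j=1}^M N_j(u)^2\right)^{1/2},
\]
where the last inequality uses $\E[N_j(g)^2] \leq \lambda^2$ by definition of $\lambda$.

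To close the argument, I would invoke the normalization \eqref{eq:opnorm1} from the proof of \pref{lem:Esup}, which ensures $\sup_{x \in T} \sum_{j=1}^M N_j(x)^2 = 1$, so that $\sum_{j=1}^M N_j(u)^2 \leq 1$ for every $u \in T$; this yields $\E \|g\|_{\cE(u)} \leq \lambda$ and hence the stated bound. There is no serious obstacle here: both estimates reduce immediately to the Dual Sudakov inequality, and the only mildly subtle point is recognizing that the required control on $\E \|g\|_{\cE(u)}$ in the second inequality is precisely what the normalization from the outer proof delivers via a second-moment computation.
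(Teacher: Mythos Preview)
Your proposal is correct and follows essentially the same argument as the paper: apply the Dual Sudakov inequality to each norm, use the definition of $\kappa$ for the first, and for the second use Jensen's inequality together with the normalization \eqref{eq:opnorm1} to bound $\E\|g\|_{\cE(u)} \leq \lambda$.
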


\begin{proof}
   Both inequalities follow readily from \pref{lem:dual-sudakov}: If $g$ is a standard $n$-dimensional Gaussian, then
   \[
      e_h(B_2^n, \|\cdot\|_{\cN}) \lesssim 2^{-h/2} \E \|g\|_{\cN} = 2^{-h/2} \kappa,
   \]
   by the definition of $\kappa$.
   For the second inequality,
   \[
      e_h(B_2^n, \|\cdot\|_{\cE(u)}) \lesssim 2^{-h/2} \E \|g\|_{\cE(u)}\,.
   \]
   Now use convexity of the square to bound
   \[
      \left(\E \|g\|_{\cE(u)}\right)^2 \leq \E \|g\|_{\cE(u)}^2 = \sum_{j=1}^M N_j(u)^2 \E [N_j(g)^2]
      \leq \lambda^2\,,
   \]
   where the final line uses the definition of $\lambda$ and $\sum_{j=1}^M N_j(u)^2 \leq 1$ by \eqref{eq:opnorm1}, because $u \in T$.
\end{proof}

We also need a basic estimate that we will use to apply \pref{cor:f-chaining}.
Observe that for $x,\hat{x} \in T$,
\begin{equation}\label{eq:diam-bnd}
   d(x,\hat{x}) \stackrel{\substack{\eqref{eq:opnorm1}}}{\leq} \sqrt{2} \|x-\hat{x}\|_{\cN}
   \leq \sqrt{2} \left(\|x\|_{\cN} + \|\hat{x}\|_{\cN}\right) \leq 2 \sqrt{2}\,,
\end{equation}
where the last inequality uses $\|x\|_{\cN} \leq (\sum_{j=1}^M N_j(x)^2)^{1/2} \leq 1$ for $x \in T$, by \eqref{eq:opnorm1}.

\begin{lemma}
   \label{lem:simple-count}
   For any $a > 0$,
   if $x_1,\ldots,x_K \in T$ satisfy $d(x_i,x_j) \geq a$ for $i \neq j$,
   then, $K \leq \left(\frac{6}{a}\right)^n$.
\end{lemma}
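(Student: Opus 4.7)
The plan is to reduce this to a classical disjoint-balls volume estimate, but carried out in the normed space $(\R^n, \|\cdot\|_\cN)$ rather than in the Euclidean metric. Two preparatory ingredients are already present in the development above, so most of the work has been done.

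First, by the normalization \eqref{eq:opnorm1}, every $x \in T$ satisfies
\[
   \|x\|_\cN^2 = \max_{j \in [M]} N_j(x)^2 \leq \sum_{j=1}^M N_j(x)^2 \leq 1\,,
\]
so $T$ is contained in the unit ball $B_\cN$ of $\|\cdot\|_\cN$. Second, \eqref{eq:diam-bnd} gives $d(x,\hat x) \leq \sqrt{2}\,\|x-\hat x\|_\cN$ on $T$; combined with the hypothesis $d(x_i,x_j) \geq a$, this forces $\|x_i - x_j\|_\cN \geq a/\sqrt 2$, so that $x_1,\ldots,x_K$ form an $(a/\sqrt 2)$-separated set in the normed space $(\R^n,\|\cdot\|_\cN)$.

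The main step is then the standard disjoint-balls computation. The $\|\cdot\|_\cN$-balls of radius $a/(2\sqrt 2)$ centered at the $x_i$'s are pairwise disjoint and all contained in $(1 + a/(2\sqrt 2))\,B_\cN$. Since $B_\cN$ is a bounded symmetric convex body and Lebesgue volume scales as $r^n$ under dilation in $\R^n$,
\[
   K \leq \left(\frac{1 + a/(2\sqrt 2)}{a/(2\sqrt 2)}\right)^n = \left(1 + \frac{2\sqrt 2}{a}\right)^n\,.
\]
A pair of distinct $a$-separated points in $T$ can exist only when $a \leq \diam_d(T) \leq 2\sqrt 2$ (by \eqref{eq:diam-bnd} again), in which case $2\sqrt 2/a \geq 1$ and therefore $1 + 2\sqrt 2/a \leq 4\sqrt 2/a \leq 6/a$, yielding the claimed bound.

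I do not anticipate any real obstacle here: both preparatory inequalities are explicit in the text, and the remainder is the routine packing-by-volume argument in a finite-dimensional normed space. The one point of mild care is the numerical constant $6$, which is chosen loosely enough to comfortably absorb the $\sqrt 2$ losses arising in the comparison between $d$ and $\|\cdot\|_\cN$.
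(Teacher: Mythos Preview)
Your proof is correct and follows essentially the same route as the paper: reduce to a separated set in the norm $\|\cdot\|_{\cN}$ via \eqref{eq:diam-bnd}, apply the standard volume packing bound $K \leq (1+2\sqrt{2}/a)^n$, and then absorb the constant using $a \leq 2\sqrt{2}$ when $K \geq 2$. The paper's argument is the same, just stated more tersely.
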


\begin{proof}
   As noted above, we have $\|x\|_{\cN} \leq 1$ for $x \in T$, and \eqref{eq:diam-bnd} gives 
   $\|x_i-x_j\|_{\cN} \geq a/\sqrt{2}$ for $i \neq j$.
   Therefore by a simple volume argument (valid for any norm on $\R^n$):
   \[
      K \leq \left(1+\frac{2\sqrt{2}}{a}\right)^n \leq \left(\frac{6}{a}\right)^n,
   \]
   where the last inequality follows because if $K \geq 2$, then \eqref{eq:diam-bnd} implies $a \leq 2\sqrt{2}$.
\end{proof}

\paragraph{The growth functionals}
Define a norm on $\R^n$ by
\begin{equation}\label{eq:iii}
   \vertiii{u} \seteq \left(\|u\|^2 + \sum_{j=1}^M N_j(u)^2\right)^{1/2}\,.
\end{equation}
Denote $r \seteq 64$.
Let $h_0$ be the largest integer so that $2^{2^{h_0}} \leq 2^{n(h-1)/2}$, and
note that $h_0 \leq O(\log n)$.
Define
\begin{align}
   F_h(S) &\seteq 2 - \inf \left\{ \vertiii{u}^2 : u \in \conv(S) \right\} + \frac{\max(h_0+1-h,0)}{\log n}, \qquad && h=0,1,\ldots,h_0\,.\label{eq:Fh}
\end{align}
Recall that $T \subseteq B_2^n$ and, along with  \eqref{eq:opnorm1}, this gives $\max_{u \in T} \vertiii{u}^2 \leq 2$.
Since $h_0 \leq O(\log n)$, we have $F_0(T) \leq O(1)$.

\smallskip

From \eqref{eq:diam-bnd}, we have $\diam_d(T) \leq O(1)$.
Note also that from \pref{lem:simple-count}, it holds that
the packing assumption \eqref{eq:growth-cond} is satisfied with $L \leq O(1)$
and $k = n$.
Therefore if we can verify that our functionals satisfy the growth conditions \eqref{eq:no-packing}
for $h=0,1,\ldots,h_0$, then we will conclude from \eqref{eq:f-chaining} that
\begin{equation}\label{eq:gamma-pre}
   \gamma_2(T,d) \lesssim \frac{1}{c^*} + 1\,.
\end{equation}

\medskip

\paragraph{Consideration of $(a,r)$-separated sets}
Define $K \seteq N_{h+1}$ and consider points $\{x_{1},\ldots,x_K \} \subseteq T$
such that $d(x_{\ell},x_{\ell'}) \geq a$ whenever $\ell \neq \ell'$, along with sets
$H_{\ell} \subseteq T \cap B_d(x_{\ell}, a/r)$ for $\ell=1,\ldots,K$.

\medskip

Let $z_0$ be a minimizer of $\vertiii{u}^2$ over $u \in\conv(\bigcup_{\ell \leq K} H_{\ell})$,
and note that $z_0 \in T$ since $T$ is closed and convex.
Define $\theta_0 \seteq \vertiii{z_0}^2$ and
\[
   \theta \seteq \max_{\ell \leq K} \min \left\{ \vertiii{u}^2 : u \in \conv(H_{\ell}) \right\},
\]
and for each $\ell \in [K]$, let $z_{\ell} \in \conv(H_{\ell})$ be such that $\vertiii{z_{\ell}}^2 \leq \theta$.

Note that $\conv(H_{\ell}) \subseteq \conv(B_d(x_{\ell},a/r)) \subseteq B_d(x_{\ell},4a/r)$, where the latter inclusion
follows from \pref{lem:approx-convex}.
Since $z_{\ell} \in \conv(H_{\ell})$, we have $d(x_{\ell},z_{\ell}) \leq 4a/r$ for all $\ell \in \{1,\ldots,K\}$.
In particular for $\ell,\ell' \in \{1,\ldots,K\}$ with $\ell \neq \ell'$, we can use the quasimetric inequalities \eqref{eq:real-qm} to write
\begin{align*}
   a \leq d(x_{\ell},x_{\ell'}) &\leq 2 \sqrt{2} \left(d(x_{\ell},z_{\ell}) + d(z_{\ell},x_{\ell'})\right) \\
                         &\leq 2 \sqrt{2}\,\frac{4a}{r} + 8 \left(d(z_{\ell},z_{\ell'}) + d(z_{\ell'},x_{\ell'})\right)
                         \leq (8+2\sqrt{2}) \frac{4a}{r} + 8\, d(z_{\ell},z_{\ell'}).
\end{align*}
Using our choice $r = 64$, we conclude that
that for $\ell \neq \ell'$,
\begin{equation}\label{eq:still-sep}
   d(z_{\ell},z_{\ell'}) \geq \frac{a}{32}\,.
\end{equation}

Observe that
\[
   F_h\left(\bigcup_{\ell \leq m} H_{\ell}\right) - \min_{\ell \leq K} F_{h+1}(H_{\ell}) = (2-\theta_0) - (2-\theta) + \frac{1}{\log n}
   = \theta - \theta_0 + \frac{1}{\log n}\,,
\]
thus to verify that the growth condition \pref{def:growth} holds for $\{F_h\}$,
our goal is to show that
\begin{equation}\label{eq:goal-growth}
   \theta - \theta_0 + \frac{1}{\log n} \gtrsim \frac{2^{h/2} a}{\kappa + \lambda \sqrt{\log n}}\,,\qquad h=0,1,\ldots,h_0\,.
\end{equation}
This will confirm the growth condition with $c^* \asymp \left(\lambda \sqrt{\log n} + \kappa\right)^{-1}$,
and therefore \eqref{eq:gamma-pre} yields our desired goal \eqref{eq:goal}.

\smallskip

The next lemma exploits $2$-uniform convexity of the $\ell_2$ distance.
Note that the claimed inequality would fail (in general) if the left-hand side were replaced by the
larger quantity $\vertiii{z_0-z_{\ell}}^2$,
as $\vertiii{\cdot}$ is not necessarily $2$-convex.

\begin{lemma}\label{lem:uc2}
   For every $\ell=1,\ldots,K$, it holds that
   \[
      \left\|z_0-z_{\ell}\right\|^2 + \sum_{j=1}^M \left(N_j(z_0)-N_j(z_{\ell})\right)^2 \leq 2 (\theta-\theta_0)\,.
   \]
\end{lemma}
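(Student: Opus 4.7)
The plan is to exploit $2$-uniform convexity via the parallelogram identity applied to the midpoint $m \seteq (z_0 + z_\ell)/2$. The key observation is that $m$ lies in $\conv(\bigcup_{\ell' \leq K} H_{\ell'})$, since both $z_0$ and $z_\ell$ do; hence, by the minimality of $z_0$, we have $\vertiii{m}^2 \geq \theta_0$. This will provide the leverage needed to turn the gap $\theta - \theta_0$ into a squared-distance estimate.

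First, I would apply the standard parallelogram identity for the Euclidean norm:
\[
   \|z_0\|^2 + \|z_\ell\|^2 = 2\|m\|^2 + \tfrac{1}{2}\|z_0 - z_\ell\|^2.
\]
Next, I would handle the sum of squared norm values via a purely scalar identity. For each $j$, setting $a = N_j(z_0)$ and $b = N_j(z_\ell)$, the trivial identity $a^2 + b^2 = \tfrac{1}{2}(a+b)^2 + \tfrac{1}{2}(a-b)^2$ combined with the triangle inequality $N_j(m) \leq \tfrac{1}{2}(N_j(z_0) + N_j(z_\ell))$ yields
\[
   N_j(z_0)^2 + N_j(z_\ell)^2 \geq 2 N_j(m)^2 + \tfrac{1}{2}\left(N_j(z_0) - N_j(z_\ell)\right)^2.
\]
Summing over $j$ and adding to the Euclidean identity, I get
\[
   \vertiii{z_0}^2 + \vertiii{z_\ell}^2 \geq 2\,\vertiii{m}^2 + \tfrac{1}{2}\|z_0 - z_\ell\|^2 + \tfrac{1}{2} \sum_{j=1}^M \left(N_j(z_0) - N_j(z_\ell)\right)^2.
\]

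To finish, I substitute $\vertiii{z_0}^2 = \theta_0$, $\vertiii{z_\ell}^2 \leq \theta$, and $\vertiii{m}^2 \geq \theta_0$ to obtain $\theta_0 + \theta \geq 2\theta_0 + \tfrac{1}{2}\|z_0 - z_\ell\|^2 + \tfrac{1}{2}\sum_j (N_j(z_0) - N_j(z_\ell))^2$, which rearranges exactly to the claimed inequality.

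The conceptual subtlety (and the reason the lemma is stated with $(N_j(z_0) - N_j(z_\ell))^2$ rather than the larger quantity $N_j(z_0 - z_\ell)^2$) is that we use $2$-convexity only for the Euclidean norm and for the outer $\ell_2$ aggregation of the scalars $\{N_j(\cdot)\}$; we never invoke $2$-convexity of the individual norms $N_j$, which may fail badly (e.g.\ when $N_j$ is an $\ell_\infty$ norm). The argument sidesteps this by applying the triangle inequality $N_j(m) \leq \tfrac{1}{2}(N_j(z_0) + N_j(z_\ell))$ at the level of scalars, so that the slack recovered from the parallelogram identity is measured in terms of $|N_j(z_0) - N_j(z_\ell)|$, which is all that is needed.
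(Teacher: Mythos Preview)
Your proof is correct and follows essentially the same approach as the paper: both arguments apply the scalar identity $a^2+b^2=\tfrac12(a+b)^2+\tfrac12(a-b)^2$ coordinatewise, use convexity of each $N_j$ to bound $N_j(m)\leq\tfrac12(N_j(z_0)+N_j(z_\ell))$, and then invoke the minimality of $z_0$ to get $\vertiii{m}^2\geq\theta_0$. Your bookkeeping is in fact slightly tighter---you substitute $\vertiii{z_0}^2=\theta_0$ directly rather than first passing through $\vertiii{z_0}^2\leq\vertiii{z_\ell}^2$---which is why you land exactly on the stated constant $2$.
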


\begin{proof}
   Let us use
   \[
      \left(\frac{a-b}{2}\right)^2 = \frac12 a^2 + \frac12 b^2 - \left(\frac{a+b}{2}\right)^2.
   \]
   to write
\begin{align*}
   \left\|\frac{z_0-z_{\ell}}{2}\right\|^2 + \sum_{j=1}^M \left(\frac{N_j(z_0)-N_j(z_{\ell})}{2}\right)^2
                                  &= \frac12 \left(\|z_{\ell}\|^2 + \sum_{j=1}^M N_j(z_{\ell})^2\right)
                                  + \frac12 \left(\|z_{0}\|^2 + \sum_{j=1}^M N_j(z_0)^2\right) \\
                                  & \qquad\qquad - \left\|\frac{z_0+z_{\ell}}{2}\right\|^2 - \sum_{j=1}^M \left(\frac{N_j(z_0)+N_j(z_{\ell})}{2}\right)^2.
\end{align*}
By convexity of the norm $N_j$, we have $\frac12 (N_j(z_0)+N_j(z_{\ell})) \geq N_j(\tfrac{z_0+z_{\ell}}{2})$, so the preceding
identity gives
\begin{align*}
   \left\|\frac{z_0-z_{\ell}}{2}\right\|^2 + \sum_{j=1}^M \left(\frac{N_j(z_0)-N_j(z_{\ell})}{2}\right)^2
                                  &\leq \frac12 \vertiii{z_{\ell}}^2 + \frac12 \vertiii{z_0}^2 - \vertiii{\tfrac{z_0+z_{\ell}}{2}}^2 \\
                                  &\leq \vertiii{z_{\ell}}^2 - \vertiii{\tfrac{z_0+z_{\ell}}{2}}^2 \\
                                  &\leq \theta - \theta_0\,,
\end{align*}
where the inequality $\vertiii{\frac{z_0+z_{\ell}}{2}}^2 \geq \theta_0$
follows from $\frac{z_0+z_{\ell}}{2} \in \conv(\bigcup_{\ell \leq K} H_{\ell})$,
since $z_0 \in \conv(\bigcup_{\ell \leq K} H_{\ell})$ and $z_{\ell} \in \conv(H_{\ell})$.
\end{proof}

Define $\rho \seteq \theta - \theta_0$. 
One consequence of \pref{lem:uc2} is that 
\[
   z_1,\ldots,z_K \in z_0 + \sqrt{2 \rho} B_2^n\,.
\]
We can cover $z_0 + \sqrt{2\rho} B_2^n$ by $N_h$ sets
that have $\|\cdot\|_{\cN}$-diameter bounded by $2 e_h(\sqrt{2\rho} B_2^n, \|\cdot\|_{\cN})$.
Since we have $K = N_{h+1} = N_h^2$ points $z_1,\ldots,z_K$, at least $N_{h}$
of them $z_{i_1},\ldots,z_{i_{N_h}}$ must lie in the same set of the cover.
And by definition, these points cannot all have pairwise $\|\cdot\|_{\cE(z_0)}$
distance greater than
$e_h(\sqrt{2\rho} B_2^n, \|\cdot\|_{\cE(z_0)})$.
Therefore we must have at least two points $z_{\ell}$ and $z_{\ell'}$ with $\ell \neq \ell'$ and $\ell,\ell' \geq 1$,
and such that
\begin{align*}
   \|z_{\ell}-z_{\ell'}\|_{\cN} &\leq 2 e_h(\sqrt{2\rho} B_2^n, \|\cdot\|_{\cN})\lesssim 2^{-h/2} \kappa \sqrt{\rho} \,, \\
   \|z_{\ell}-z_{\ell'}\|_{\cE(z_0)} &\leq e_h(\sqrt{2\rho} B_2^n, \|\cdot\|_{\cE(z_0)}) \lesssim 2^{-h/2} \lambda \sqrt{\rho}\,,
\end{align*}
where the latter two estimates follow from \pref{lem:cov1} and \pref{lem:cov2}, respectively.

Let us also note a second consequence of \pref{lem:uc2}, that 
\[
   \sum_{j=1}^M \left(N_j(z_0)-N_j(z_{\ell})\right)^2 + 
   \sum_{j=1}^M \left(N_j(z_0)-N_j(z_{\ell'})\right)^2 \leq 4 \rho\,.
\]
Using the three preceding inequalities in \pref{lem:dist-norms} yields
\[
   a^2 \stackrel{\eqref{eq:still-sep}}{\lesssim} d(z_{\ell},z_{\ell'})^2 \lesssim 2^{-h} \rho^2 \kappa^2 + 2^{-h} \rho \lambda^2
   \leq \max\left(2^{-h} \kappa^2 \rho^2, 2^{-h} \lambda^2 \rho\right).
\]

This implies
\[
   \rho \gtrsim \min\left(\frac{2^{h/2} a}{\kappa},\frac{2^h a^2}{\lambda^2}\right).
\]
Since it holds that
\[
   \frac{2^h a^2}{\lambda^2} + \frac{1}{\log n} \geq \frac{2^{h/2} a}{\lambda \sqrt{\log n}},
\]
we conclude that
\[
   \rho + \frac{1}{\log n} \gtrsim
   \min\left(\frac{2^{h/2} a}{\kappa},\frac{2^{h/2} a}{\lambda \sqrt{\log n}}\right)
   \gtrsim \frac{2^{h/2} a}{\lambda \sqrt{\log n} + \kappa}\,.
\]
Recalling that $\rho = \theta - \theta_0$, we have established \eqref{eq:goal-growth},
completing the proof.
\end{proof}

\begin{remark}[Discussion of the implicit partitioning]
\label{rem:discussion}
It is often more intuitive to think about bounding $\gamma_2(T,d)$ by explicitly
constructing the sequence of partitions $\{\cA_h\}$ (recall \eqref{eq:gamma2}).
This is a technical process that is aided significantly by \pref{thm:f-chaining}, whose proof involves
the construction of partitions from growth functionals.

Recall the norm $\vertiii{\cdot}$ from \eqref{eq:iii}
and for a subset $S \subseteq B_2^n$, define the quantity
\[
   \f(S) \seteq 2 - \min \left\{ \vertiii{x}^2 : x \in \conv(S) \right\}.
\]
Then $\f(S)$ can be considered as an approximate measure of the ``size'' of $S$,
where sets of larger $\f(S)$ value tend to have a larger $\E \sup_{x \in S} \sum_{j=1}^M \e_j N_j(x)^2$ value.

\smallskip

Recall that $r \seteq 64$.
Consider a ball $B_d(x_0,\eta)$, and let $z_0 \in B_d(x_0,4\eta)$ be such that $\f(B_d(x_0,\eta)) = 2-\vertiii{z_0}^2$.
Let us think of $z_0$ as the ``analytic center'' of the ball $B_d(x_0,\eta)$.
(We have to take $z_0 \in B_d(x_0,4\eta)$ because the ball $B_d(x_0,\eta)$ is only approximately convex.)

Define the distance
\[
   \Delta(x,y) \seteq \left(\|x-y\|^2 + \sum_{j=1}^M (N_j(x)-N_j(y))^2\right)^{1/2} \,,\qquad x,y \in \R^n\,.
\]
For $x \in B_d(x_0,\eta)$, let $\hat{x} \in B_d(x,4\eta/r^2)$ denote a point satisfying
$\f(B_d(x,\eta/r^2)) = 2 - \vertiii{\hat{x}}^2$.
Then \pref{lem:uc2} gives
\begin{equation}\label{eq:size-fact}
   \Delta(z_0,\hat{x})^2  \lesssim \f\left(B_d(x_0, \eta)\right) - \f\left(B_d(x, \eta/r^2)\right)\,.
\end{equation}
In other words, either the $\f$-value of $B_d(x, \eta/r^2)$ is significantly smaller than that of $B_d(x_0,\eta)$, or $\hat{x}$
is close (in the distance $\Delta$) to the analytic center $z_0$.

The second part of the argument involves bounding the number of centers that can be within a certain distance of $z_0$.
Consider now any points $x_1,\ldots,x_M \in B_d(x_0,\eta)$ with $d(x_i,x_j) > \eta/r$ for $i \neq j$.
\pref{lem:dist-norms} and the covering estimates on $e_h(B_2^n, \|\cdot\|_{\cE(z_0)})$ and $e_h(B_2^n, \|\cdot\|_{\cN})$ together
give that for some constant $C > 0$,
\begin{equation}\label{eq:card-fact}
\# \left\{ i \geq 1 : \Delta(z_0, \hat{x}_i)^2 \leq \rho \right\} \leq \exp\left(\frac{C}{\eta^2} \left(\kappa^2 \rho^2 + \lambda^2 \rho\right)\right).
\end{equation}
Now \eqref{eq:size-fact} and \eqref{eq:card-fact} imply that for any $\delta > 0$,
\begin{equation}\label{eq:key-tradeoff}
   \# \left\{ i \geq 1 : \f\left(B_d(x_i,\eta/r^2)\right) \geq \f\left(B_d(x_0,\eta)\right) - \delta \right\} \leq\exp\left(\frac{C}{\eta^2} \left(\kappa^2 \delta^2 + \lambda^2 \delta\right)\right).
\end{equation}
This is the key tradeoff occuring in the argument: A bound on the number of pairwise separated ``children''
$B_d(x_i,\eta/r^2)$ of $B_d(x_0,\eta)$ that do not experience a significant reduction in their $\f$-value.

Employing this bound repeatedly, in a sufficiently careful manner, allows one to construct a sequence of partitions $\{\cA_h\}$
that yields the desired upper bound on $\gamma_2(T,d)$. The role of \pref{thm:f-chaining} is to automate this process.
\end{remark}

\section{Hypergraph sparsification}
\label{sec:h-sparse}

Suppose $H=(V,E,w)$ is a weighted hypergraph and denote $n \seteq |V|$.
For a single hyperedge $e \in E$, let us recall the definitions
\[
   Q_e(x) \seteq \max_{\{u,v\} \in {e \choose 2}} (x_u-x_v)^2\,,
\]
as well as the energy
\[
   Q_H(x) \seteq \sum_{e \in E} w_e Q_e(x)\,.
\]

\subsection{Sampling}
\label{sec:sampling}

Suppose we have a probability distribution $\mu \in \R_+^E$ on hyperedges in $H$.
Let us sample hyperedges $\tilde{E} = \{ e_1,e_2,\ldots,e_M \}$ independently according to $\mu$.
The weighted hypergraph $\tilde{H}=(V,\tilde{E},\tilde{w})$ is defined so that
\[
   Q_{\tilde{H}}(x) = \frac{1}{M} \sum_{k=1}^M \frac{w_{e_k}}{\mu_{e_k}} Q_{e_k}(x)\,,
\]
In particular, $\E[Q_{\tilde{H}}(x)] = Q_{H}(x)$ for all $x \in \R^V$.
Recall that the hyperedge weights in $\tilde{H}$ are given by \eqref{eq:edge-weights}.
To help us choose the distribution $\mu$, we now introduce a Laplacian on an auxiliary graph.

\paragraph{An auxiliary Laplacian}
Define the edge set $F \seteq \bigcup_{e \in E} {e \choose 2}$,
and let
$G=(V,F,c)$ be a weighted graph, where we will choose the edge conductances $c \in \R_+^F$ later.
Denote by $L_G : \R^V \to \R^V$ the weighted Laplacian
\begin{equation}\label{eq:weighted-lap}
   L_G \seteq \sum_{\{i,j\} \in F} c_{ij} (\chi_i-\chi_j) (\chi_i-\chi_j)^*,
\end{equation}
where $\chi_1,\ldots,\chi_n$ is the standard basis of $\R^n$.
Let $L_G^{+}$ denote its Moore-Penrose pseudoinverse and define
\begin{align}
   \sfR_{ij} &\seteq \|L_G^{+/2} (\chi_i-\chi_j)\|^2,  
   \qquad\quad\, \{i,j\} \in F\,,  \nonumber \\
   \sfR_{\max}(e) &\seteq \max \left\{ \sfR_{ij} : \{i,j\} \in \textstyle{{e \choose 2}} \right\}, \,\qquad  e \in E\,, \nonumber \\
   Z &\seteq \sum_{e \in E} w_e \sfR_{\max}(e)\,, \nonumber \\
   \mu_e &\seteq \frac{w_e \sfR_{\max}(e)}{Z}\,, \label{eq:p-def} \quad\qquad\qquad\qquad 
   e\ \, \in E\,.
\end{align}

\begin{lemma}\label{lem:hsparse}
   Suppose it holds that
   \begin{equation}\label{eq:norm-compare-cond}
      \|x\|^2 \leq Q_H(L_G^{+/2} x)\,,\qquad  \forall x \in \R^n\,.
   \end{equation}
   Then for any $\e \in (0,1)$, there is a number 
   \[
      M_0 \lesssim \frac{\log D}{\e^2} Z \log n
   \]
   such that for $M \geq M_0$,
   with probability at least $1/2$, the hypergraph $\tilde{H}$ is a spectral $\e$-sparsifier for $H$.
\end{lemma}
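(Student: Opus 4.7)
The plan is to establish the expectation bound
\[
   \E \sup_{x : Q_H(x) \leq 1} \bigabs{Q_{\tilde H}(x) - Q_H(x)} \leq \e/2,
\]
which combined with Markov's inequality yields the theorem with probability at least $1/2$. Standard symmetrization over the iid samples $e_1,\ldots,e_M$ (with independent Rademacher signs $\e_1,\ldots,\e_M$) reduces the task to showing that
\[
   \cE \seteq \E_{e,\e}\sup_{x : Q_H(x) \leq 1}\Bigabs{\sum_{k=1}^M \e_k \frac{w_{e_k}}{\mu_{e_k}} Q_{e_k}(x)} \;\lesssim\; \e M.
\]
Changing variables via $y = L_G^{1/2} x$, the hypothesis $\|x\|^2 \leq Q_H(L_G^{+/2} x)$ gives $T \seteq \{y : Q_H(L_G^{+/2} y) \leq 1\} \subseteq B_2^n$. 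Writing $(x_i - x_j)^2 = \iprod{y, L_G^{+/2}(\chi_i - \chi_j)}^2$ and setting
\[
   y_{ij}^k \seteq \sqrtsmash{w_{e_k}/\mu_{e_k}}\, L_G^{+/2}(\chi_i - \chi_j), \qquad N_k(y) \seteq \max_{\{i,j\} \in {e_k \choose 2}}\bigabs{\iprod{y, y_{ij}^k}},
\]
yields $N_k(y)^2 = \frac{w_{e_k}}{\mu_{e_k}} Q_{e_k}(L_G^{+/2} y)$. The choice \eqref{eq:p-def} of $\mu_e$ is engineered precisely so that $\|y_{ij}^k\|^2 = \frac{w_{e_k}}{\mu_{e_k}} \sfR_{ij} \leq \frac{w_{e_k}}{\mu_{e_k}} \sfR_{\max}(e_k) = Z$, so the norms $\{N_k\}$ fit the hypothesis of \pref{cor:lp-version} with $\|A\|_{2\to\infty} \leq \sqrt Z$, each $|S_k| \leq \binom{D}{2}$, and total row count $m \leq M\binom{D}{2}$.

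Conditioning on $e_1,\ldots,e_M$, applying \pref{cor:lp-version}, and taking outer expectation with Jensen's inequality yield
\[
   \cE \;\lesssim\; \alpha\sqrtsmash{\cB},\qquad \alpha \seteq \sqrtsmash{Z\log(m+n)\log D},\ \ \cB \seteq \E_e \sup_{y \in T} \sum_{k=1}^M N_k(y)^2.
\]
The main obstacle is bounding $\cB$: the naive worst-case estimate $\sup_{y \in T}\sum_k N_k(y)^2 \leq MZ$ would only give $M \gtrsim Z^2 \log n \log D/\e^2$, off by a factor of $Z$ from the target. To remove this factor, I will exploit that $\E_{e_k}[N_k(y)^2] = \sum_e w_e Q_e(x) = Q_H(x) \leq 1$ for $y \in T$, so that $\E_e \sum_{k=1}^M N_k(y)^2 = M Q_H(x) \leq M$ pointwise.

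Standard symmetrization of the centered empirical process $\sum_k (N_k(y)^2 - \E N_k(y)^2)$ then gives
\[
   \E_e \sup_{y \in T}\Bigabs{\sum_{k=1}^M N_k(y)^2 - M Q_H(x)} \;\leq\; 2\cE + O(\sqrt M),
\]
whence $\cB \leq M + 2\cE + O(\sqrt M)$. Combining with the chaining bound produces the self-referential inequality $\cE \lesssim \alpha(\sqrt M + \sqrtsmash{\cE})$, which solves to $\cE \lesssim \alpha\sqrt M + \alpha^2$. Choosing $M \gtrsim Z \log(m+n)\log D/\e^2$ then ensures $\cE = O(\e M)$; using $\log(m+n) \lesssim \log n$ (since $D \leq n$ and $M \leq \poly(n/\e)$ under the resulting bound) yields the claimed $M_0 \lesssim Z \log D \log n/\e^2$.
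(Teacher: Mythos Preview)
Your approach is essentially the same as the paper's---symmetrization, the change of variables $y = L_G^{1/2}x$ placing $T$ inside $B_2^n$, the identification of $N_k(y)^2 = \tfrac{w_{e_k}}{\mu_{e_k}} Q_{e_k}(L_G^{+/2}y)$, the application of \pref{cor:lp-version}, and the self-referential inequality $\cE \lesssim \alpha\sqrt{M+\cE}$ are all exactly what the paper does.

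There is, however, one small but genuine gap. You build the matrix $A$ with one row per vector $y_{ij}^k$, so the row count is $m \leq M\binom{D}{2}$, and the $\sqrt{\log(m+n)}$ factor from \pref{cor:lp-version} then depends on $M$. Your resolution, ``$M \leq \poly(n/\e)$ under the resulting bound,'' is circular: the resulting bound is $M_0 \lesssim Z\log D\log n/\e^2$, and nothing in the lemma's hypotheses forces $Z$ to be polynomial in $n$. If $Z/\e^2$ is super-polynomial in $n$, your fixed-point argument produces an extra factor of $\log(Z/\e)$ rather than $\log n$, and the stated bound fails.

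The paper closes this gap by noticing that for fixed $\{i,j\}\in F$, the vectors $y_{ij}^{k}$ over all $k$ are scalar multiples of the single vector $L_G^{+/2}(\chi_i-\chi_j)$. Hence one can take $A:\R^n\to\R^F$ with just $|F|\leq\binom{n}{2}$ rows and realize each $N_k$ as a \emph{weighted} $\ell_\infty$ norm $N_k(x)=\max_{\{i,j\}\in\binom{e_k}{2}}\tfrac{\|y_{ij}^{e_k}\|}{S_{ij}}\,|(Ax)_{\{i,j\}}|$ with weights in $[0,1]$ (this is precisely why \pref{cor:lp-version} is stated for weighted norms). Then $m\leq n^2$ regardless of $M$, and $\log(m+n)\lesssim\log n$ holds unconditionally. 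With this one-line fix, your argument goes through and matches the paper's.
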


\begin{proof}
By convexity,
\begin{equation}\label{eq:start}
   \E_{\tilde{H}} \max_{v : Q_H(v) \leq 1} \left|Q_H(v)-Q_{\tilde{H}}(v)\right| \leq
   \E_{\tilde{H},\hat{H}} \max_{v : Q_H(v) \leq 1} |Q_{\tilde{H}}(v)-Q_{\hat{H}}(v)|\,,
\end{equation}
where $\hat{H}$ is an independent copy of $\tilde{H}$.

The latter quantity can be written as
\begin{align}
   \nonumber
   \E_{\tilde{e},\hat{e}} &\max_{v : Q_H(v) \leq 1} \left|\frac{1}{M} \sum_{i=1}^M \frac{w_{\tilde{e}_i}}{\mu_{\tilde{e}_i}} Q_{\tilde{e}_i}(v) - \frac{1}{M} \sum_{i=1}^M \frac{w_{\hat{e}_i}}{\mu_{\hat{e}_i}} Q_{\hat{e}_i}(v)\right| \\
   &= 
   \E_{\e} \E_{\tilde{e},\hat{e}} \max_{v : Q_H(v) \leq 1} \left|\frac{1}{M} \sum_{i=1}^M \e_i \left(\frac{w_{\tilde{e}_i}}{\mu_{\tilde{e}_i}}Q_{\tilde{e}_i}(v) - \frac{w_{\hat{e}_i}}{\mu_{\hat{e}_i}} Q_{\hat{e}_i}(v)\right)\right| \label{eq:sign-trick} \\
   &\leq 2 \E_{\tilde{H}} \E_{\e} \max_{v : Q_H(v) \leq 1} \left|\frac{1}{M} \sum_{i=1}^M \e_i \frac{w_{e_i}}{\mu_{e_i}} Q_{e_i}(v)\right|,\label{eq:latterq}
\end{align}
where $\e_1,\ldots,\e_M$ are i.i.d. Bernoulli $\pm 1$ random variables.
Note that we can introduce signs in \eqref{eq:sign-trick} because the distribution of
$\frac{w_{\tilde{e}_i}}{\mu_{\tilde{e}_i}}Q_{\tilde{e}_i}(v) - \frac{w_{\hat{e}_i}}{\mu_{\hat{e}_i}} Q_{\hat{e}_i}(v)$
is symmetric.

For $e \in E$ and $\{i,j\} \in {e \choose 2}$, define the vectors
\begin{align*}
   y_{ij} &\seteq L_G^{+/2} (\chi_i-\chi_j) \\
   y^e_{ij} &\seteq \sqrt{\frac{w_e}{\mu_e}}\  y_{ij} = \sqrt{\frac{Z}{\sfR_{\max}(e)}}\ y_{ij}\,.
\end{align*}
Then we have
\begin{equation}\label{eq:edge-exp}
\frac{w_e}{\mu_e} Q_e(L_G^{+/2} x)  = \frac{w_e}{\mu_e} \max_{\{i,j\} \in {e \choose 2}}|\langle L_G^{+/2} x, \chi_i-\chi_j\rangle|^2
= \max_{\{i,j\} \in {e \choose 2}}\langle x,y_{ij}^e\rangle^2\,.
\end{equation}

Define the values
\[
   S_{ij} \seteq \max_{e \in E : \{i,j\} \in {e \choose 2}} \|y_{ij}^e\|\,,\quad \{i,j\} \in F,
\]
and the linear map $A : \R^n \to \R^F$ by $(Ax)_{\{i,j\}} \seteq S_{ij} \langle x,y_{ij}/\|y_{ij}\|\rangle$.

\smallskip

For $k=1,\ldots,M$, define the weighted $\ell_{\infty}$ norms
\[
   N_k(z) \seteq \max \left\{ \left|(Az)_{\{i,j\}}\right| \frac{\|y_{ij}^{e_k}\|}{S_{ij}} : \{i,j\} \in {e_k \choose 2}, S_{ij} > 0 \right\}.
\]
It holds that
\[
   N_k(x) = \max_{\{i,j\} \in e_k} |\langle x,y_{ij}^{e_k}\rangle|\,,
\]
so from \eqref{eq:edge-exp}, we have
\begin{align}
   Q_{\tilde{H}}(L_G^{+/2} x) &= \frac{1}{M} \sum_{i=1}^M N_i(x)^2\,, \label{eq:rewrite1} \\
   \frac{1}{M} \sum_{i=1}^M \e_i \frac{w_{e_i}}{\mu_{e_i}} Q_{e_i}(L_G^{+/2} x) &= 
   \frac{1}{M} \sum_{i=1}^M \e_i N_i(x)^2\,.\label{eq:rewrite0}
\end{align}
Thus we can write the quantity \eqref{eq:latterq} as
\[
   2\E_{\tilde{H}} \E_{\e} \max_{x : Q_H(L_G^{+/2} x) \leq 1} \left|\frac{1}{M} \sum_{i=1}^M \e_i N_i(x)^2\right|
   \leq 4 \E_{\tilde{H}} \E_{\e} \max_{x : Q_H(L_G^{+/2} x) \leq 1} \frac{1}{M} \sum_{i=1}^M \e_i N_i(x)^2,
\]

Define $T \seteq \{ x \in \R^n : Q_H(L_G^{+/2} x) \leq 1 \}$ and note that from \eqref{eq:norm-compare-cond}, we have $T \subseteq B_2^n$.
Now apply \pref{cor:lp-version} to bound
\begin{equation}\label{eq:ucbnd}
    \E_{\e} \max_{x \in T} \frac{1}{M} \sum_{i=1}^M \e_i N_i(x)^2 \lesssim
    \frac{\|A\|_{2\to\infty} \sqrt{\log n \log D}}{M^{1/2}}  \max_{x \in T} \left(\frac{1}{M} \sum_{i=1}^M  N_i(x)^2\right)^{1/2}.
\end{equation}
Note also that
\[
   \max_{x \in T} \frac{1}{M} \sum_{i=1}^M  N_i(x)^2 =
    \max_{v : Q_H(v) \leq 1} \frac{1}{M} \sum_{i=1}^M N_i\left(L_G^{1/2} v\right)^2=
   \max_{v : Q_H(v) \leq 1} Q_{\tilde{H}}(v)\,.
\]
where the first equality follows from the fact that $Q_H(x)=Q_H(\hat{x})$ when $x-\hat{x} \in \ker(L_G)$, and
the second inequality uses this and an application of \eqref{eq:rewrite1} with $x = L_G^{1/2} v$.

Recalling our starting point \eqref{eq:start}, it follows that for some universal constant $C > 0$,
\begin{align*}
   \tau \seteq \E_{\tilde{H}} \max_{v : Q_H(v) \leq 1} \left|Q_H(v)-Q_{\tilde{H}}(v)\right| &\leq
   C \frac{\|A\|_{2\to\infty} \sqrt{\log n \log D}}{M^{1/2}} 
   \E_{\tilde{H}} \left(\max_{v : Q_H(v) \leq 1} Q_{\tilde{H}}(v)\right)^{1/2} \\
                                                                                &\leq
                                                                                C \frac{\|A\|_{2\to\infty} \sqrt{\log n \log D}}{M^{1/2}} 
\left(\E_{\tilde{H}} \max_{v : Q_H(v) \leq 1} Q_{\tilde{H}}(v)\right)^{1/2},
\end{align*}
where the last inequality is by concavity of the square root.

Observe that
\[
   \max_{v : Q_H(v) \leq 1} Q_{\tilde{H}}(v) \leq \max_{v : Q_H(v) \leq 1} \left(\left|Q_H(v)-Q_{\tilde{H}}(v)\right|+Q_H(v)\right)
\leq 1 + \max_{v : Q_H(v) \leq 1} |Q_H(v) - Q_{\tilde{H}}(v)|\,,
\]
and therefore we have
\[
   \tau \leq C \frac{\|A\|_{2\to\infty} \sqrt{\log n \log D}}{M^{1/2}}  \left(1+\tau\right)^{1/2}\,.
\]
It follows that if $M \geq (2C\|A\|_{2\to\infty} \sqrt{\log n \log D})^2$, then $\tau \leq 4C  \frac{\|A\|_{2\to\infty} \sqrt{\log n \log D}}{M^{1/2}}$.

For $0 < \e < 1$, choosing 
\[
   M \seteq \frac{4C^2 \log D}{\e^2} \|A\|_{2\to\infty}^2 \log n
\]
gives
\[
   \E_{\tilde{H}} \max_{v : Q_H(v) \leq 1} \left|Q_H(v)-Q_{\tilde{H}}(v)\right| = \tau \leq \e\,.
\]

The proof is complete once we observe that
\[
   \|A\|^2_{2\to\infty} = \max_{\{i,j\} \in F} S_{ij}^2 = \max_{e \in E, \{i,j\} \in {e \choose 2}} \|y_{ij}^e\|^2 = Z \max_{\{i,j\} \in {e \choose 2}} \frac{\sfR_{ij}}{\sfR_{\max}(e)}
   \leq Z\,.
   \qedhere
\]
\end{proof}

\subsection{Choosing conductances}
\label{sec:choose-con}

We are therefore left to find edge conductances in the graph $G=(V,F,c)$ so that 
\eqref{eq:norm-compare-cond} holds and $Z$ is small.
To this end, let us choose
nonnegative numbers
\[
   \left\{ c_{ij}^e \geq 0 : \{i,j\} \in {e \choose 2}, e \in E \right\}
\]
such that
\begin{equation}\label{eq:cap1}
   \sum_{\{i,j\} \in {e \choose 2}} c_{ij}^e = w_e, \quad \forall e \in E\,.
\end{equation}

For $\{i,j\} \in F$, we then define our edge conductance
\begin{equation}\label{eq:he-split}
   c_{ij} \seteq \sum_{e \in E : \{i,j\} \in {e \choose 2}} c_{ij}^e\,.
\end{equation}
In this case,
\begin{align*}
   \|L_G^{1/2} v\|^2 = \langle v, L_G v\rangle &= \sum_{\{i,j\} \in F} c_{ij} (v_i-v_j)^2 \\
                                               &= \sum_{e \in E} 
                                               \sum_{\{i,j\} \in {e \choose 2}} c_{ij}^e (v_i-v_j)^2 \\
                                               &\leq \sum_{e \in E} 
                                               \sum_{\{i,j\} \in {e \choose 2}} c_{ij}^e \max_{\{i,j\} \in {e \choose 2}} (v_i-v_j)^2 \\
                                               &\stackrel{\mathclap{\eqref{eq:cap1}}}{\leq}\ \ 
                                               \sum_{e \in E} w_e \max_{\{i,j\} \in {e \choose 2}} (v_i-v_j)^2 = Q_H(v)\,.
\end{align*}
Taking $v = L_G^{+/2} x$ gives
\[
   \|x\|^2 \leq Q_H(L_G^{+/2} x),
\]
verifying \eqref{eq:norm-compare-cond}.

\begin{lemma}[Foster's Network Theorem]
   \label{lem:foster}
   It holds that $\sum_{\{i,j\} \in F} c_{ij} \sfR_{ij} \leq n-1$.
\end{lemma}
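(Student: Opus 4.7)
The plan is to prove Foster's identity via the standard trace trick, which exploits the fact that effective resistances are diagonal entries of $L_G^+$ in the edge basis, and that $L_G L_G^+$ is an orthogonal projection.

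First I would rewrite each term using the definition $\sfR_{ij} = \|L_G^{+/2}(\chi_i-\chi_j)\|^2 = \langle \chi_i - \chi_j, L_G^+(\chi_i-\chi_j)\rangle$ and the cyclicity of trace:
\[
   c_{ij} \sfR_{ij} = c_{ij} \tr\!\left((\chi_i-\chi_j)(\chi_i-\chi_j)^* L_G^+\right).
\]
Summing over $\{i,j\} \in F$ and pulling the trace outside gives
\[
   \sum_{\{i,j\} \in F} c_{ij} \sfR_{ij} = \tr\!\left(\Bigl(\sum_{\{i,j\}\in F} c_{ij} (\chi_i-\chi_j)(\chi_i-\chi_j)^*\Bigr) L_G^+\right) = \tr(L_G L_G^+),
\]
using the Laplacian decomposition \eqref{eq:weighted-lap}.

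Next I would use the standard fact about the Moore--Penrose pseudoinverse that $L_G L_G^+$ is the orthogonal projection onto the column space of $L_G$. Hence $\tr(L_G L_G^+) = \rank(L_G)$. Since $L_G$ is the Laplacian of a graph on $n$ vertices, its kernel contains the all-ones vector (so $\rank(L_G) \leq n-1$), and more precisely $\rank(L_G) = n - (\text{number of connected components of }G)$. In particular,
\[
   \sum_{\{i,j\}\in F} c_{ij} \sfR_{ij} = \rank(L_G) \leq n-1,
\]
which is the desired bound.

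There is no real obstacle here; the only point requiring care is that one must interpret $\sfR_{ij}$ via the pseudoinverse (rather than the ordinary inverse) so that the identity is valid even when $G$ is disconnected, and observe that this only strengthens the inequality. Note also that the argument gives equality up to a small correction for connected $G$, which is the classical form of Foster's theorem.
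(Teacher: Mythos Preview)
Your proof is correct and is essentially identical to the paper's own argument: both rewrite $c_{ij}\sfR_{ij}$ as a trace, sum to obtain $\tr(L_G L_G^+)$, and bound this by $\rank(L_G)\le n-1$. You simply give a bit more detail on why $\tr(L_G L_G^+)=\rank(L_G)$.
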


\begin{proof}
   Recall that $\sfR_{ij} = \langle \chi_i-\chi_j, L_G^+ (\chi_i-\chi_j)\rangle$ and $L_G = \sum_{\{i,j\} \in F} c_{ij} (\chi_i-\chi_j) (\chi_i-\chi_j)^*$.
   It follows that
   \[
      \sum_{\{i,j\} \in F} c_{ij} \sfR_{ij} = \sum_{\{i,j\} \in F} \tr(c_{ij}(\chi_i-\chi_j)(\chi_i-\chi_j)^* L_G^+) = \tr(L_G L_G^+) \leq n-1\,,
   \]
   since $\rank(L_G) \leq n-1$.
\end{proof}

Define
\begin{equation}\label{eq:hatk}
   K \seteq \max_{e \in E} \max_{\{i,j\} \in {e \choose 2}} \frac{\sfR_{\max}(e)}{\sfR_{ij}} \1_{\{c_{ij}^e > 0\}}
\end{equation}
so that
\[
   Z = \sum_{e \in E} w_e \sfR_{\max}(e) = \sum_{e \in E} \sum_{\{i,j\} \in {e \choose 2}} c_{ij}^e \sfR_{\max}(e)
   \leq K\sum_{e \in E} \sum_{\{i,j\} \in {e \choose 2}} c_{ij}^e \sfR_{ij} \leq K (n-1)\,,
\]
where the last inequality uses \eqref{eq:he-split} and \pref{lem:foster}.
In conjunction with \pref{lem:hsparse}, we have proved the following.

\begin{lemma}\label{lem:hsparse2}
   Suppose there is a choice of conductances so that \eqref{eq:cap1} holds.
   Then for any $\e > 0$, there is a spectral $\e$-sparsifier for $H$ with at most $O(K \frac{\log D}{\e^2} n \log n)$ hyperedges,
   where $K$ is defined in \eqref{eq:hatk}.
\end{lemma}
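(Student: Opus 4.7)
The plan is to apply \pref{lem:hsparse} to the auxiliary weighted graph $G=(V,F,c)$ whose edge conductances are defined from the hypothesized splitting via \eqref{eq:he-split}, namely $c_{ij} \seteq \sum_{e \in E \,:\, \{i,j\} \in {e \choose 2}} c_{ij}^e$. This reduces the claim to two points: verifying the hypothesis \eqref{eq:norm-compare-cond} of \pref{lem:hsparse} for this choice of $G$, and bounding the quantity $Z = \sum_{e \in E} w_e \sfR_{\max}(e)$ appearing there by $K(n-1)$.

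For \eqref{eq:norm-compare-cond}, I would expand $\langle v, L_G v\rangle$ using \eqref{eq:weighted-lap} and regroup the sum over $F$ according to the hyperedge responsible for each pair:
\[
\langle v, L_G v\rangle = \sum_{e \in E} \sum_{\{i,j\} \in {e \choose 2}} c_{ij}^e (v_i - v_j)^2 \leq \sum_{e \in E} w_e \max_{\{i,j\} \in {e \choose 2}} (v_i-v_j)^2 = Q_H(v),
\]
where the inequality pulls the maximum out of the inner sum and the subsequent equality invokes the normalization \eqref{eq:cap1}. Substituting $v = L_G^{+/2} x$ and using $L_G^{1/2} L_G^{+/2} x = x$ on the range of $L_G$ yields $\|x\|^2 \leq Q_H(L_G^{+/2} x)$, which is \eqref{eq:norm-compare-cond}.

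For the bound on $Z$, the defining property of $K$ in \eqref{eq:hatk} is that $\sfR_{\max}(e) \leq K \cdot \sfR_{ij}$ whenever $c_{ij}^e > 0$. I would then use \eqref{eq:cap1} to write $w_e = \sum_{\{i,j\} \in {e \choose 2}} c_{ij}^e$, swap the order of summation, and invoke Foster's theorem:
\[
Z = \sum_{e \in E} \sfR_{\max}(e) \sum_{\{i,j\} \in {e \choose 2}} c_{ij}^e \leq K \sum_{e \in E} \sum_{\{i,j\} \in {e \choose 2}} c_{ij}^e \sfR_{ij} = K \sum_{\{i,j\} \in F} c_{ij} \sfR_{ij} \leq K(n-1),
\]
where the last step uses \pref{lem:foster} applied to $G$. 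Inserting $Z \leq K(n-1)$ into the bound $M_0 \lesssim \e^{-2} Z \log D \log n$ from \pref{lem:hsparse} produces a spectral $\e$-sparsifier with at most $O(K \e^{-2} n \log n \log D)$ hyperedges.

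All of the analytic content is absorbed into \pref{lem:hsparse}, which in turn rests on the chaining bound \pref{cor:lp-version}; at the level of the present lemma there is no substantive obstacle. The argument is purely a bookkeeping step that converts the resistance-ratio parameter $K$ into a bound on the total weighted resistance $Z$ via Foster's identity, with the only small care being the legitimacy of the factorization $w_e = \sum_{\{i,j\}} c_{ij}^e$ that lets one exchange the sums over hyperedges and pairs.
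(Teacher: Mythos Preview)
Your proposal is correct and follows essentially the same argument as the paper: verify \eqref{eq:norm-compare-cond} by expanding $\langle v, L_G v\rangle$ via the splitting \eqref{eq:cap1}, then bound $Z \leq K(n-1)$ by writing $w_e = \sum_{\{i,j\}} c_{ij}^e$, applying the definition of $K$, and invoking \pref{lem:foster}, and finally plug into \pref{lem:hsparse}. Your remark about restricting to the range of $L_G$ when substituting $v = L_G^{+/2} x$ is, if anything, slightly more careful than the paper's own presentation.
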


\subsection{Balanced effective resistances}
\label{sec:balanced}

We will exhibit conductances satisfying \eqref{eq:cap1} and \eqref{eq:hatk} with $K \leq 1$.
To this end, we may assume that the weighted hypergraph $H=(V,E,w)$ has strictly positive edge weights
and that the (unweighted) graph $G_0=(V,F)$ is connected.

Define $\hat{F} \seteq \{ (e,\{i,j\}) : e \in E, \{i,j\} \in {e \choose 2} \}$, and
consider vectors $\left(c_{ij}^e : e \in E, \{i,j\} \in {e \choose 2}\right) \in \R_+^{\hat F}$.
Define the convex set
\[
   \sfK \seteq \R_+^{\hat{F}} \cap
   \left\{ \sum_{\{i,j\} \in {e \choose 2}} c_{ij}^e = w_e : e \in E \right\}.
\]

We use $\cS_+^n$ and $\cS_{++}^n$ for the cones of positive semidefinite (resp., positive definite) $n \times n$ matrices.
Define $c_{ij} \seteq \sum_{e : \{i,j\} \in {e \choose 2}} c_{ij}^e$
and denote the linear function $L_G : \R_+^{F} \to \cS_+^n$ by
\[
   L_G\left((c_{ij})\right) \seteq \sum_{\{i,j\} \in F} c_{ij} (\chi_i-\chi_j)(\chi_i-\chi_j)^*\,.
\]
Let $J$ be the all-ones matrix and
consider the objective
\[
   \Phi\left((c_{ij})\right) \seteq - \log \det\left(L_G\left((c_{ij})\right) + J\right)\,.
\]

Note that $X \mapsto - \log \det(X)$ is a convex function on the cone $\cS_{+}^n$ of $n \times n$ positive semidefinite matrices (see, e.g., \cite[\S 3.1]{bv04})
and takes the value $+\infty$ on $\cS_{+}^n \setminus \cS_{++}^n$.
Consider finally the convex optimization problem:
\begin{equation}\label{eq:optimization}
   \min \left\{ \Phi\left((c_{ij})\right) : (c_{ij}^e) \in \sf{K} \right\}.
\end{equation}
Since $G_0$ is connected, it holds that if $\left(c_{ij}\right) \in \R_{++}^F$, then
$\ker(L_G)$ is the span of $(1,1,\ldots,1)$, and therefore $L_G\left((c_{ij})\right) + J \in \cS_{++}^n$.
Therefore $\Phi$ is finite on the strictly positive orthant $\R_{++}^F$.

\begin{lemma}\label{lem:ok-prog}
   The value of \eqref{eq:optimization} is finite and there is a feasible point in the relative interior of $\sfK$.
\end{lemma}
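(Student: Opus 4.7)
The plan is to exhibit a single strictly positive feasible point that witnesses both assertions simultaneously, namely the \emph{uniform split}
\[
c_{ij}^e := \frac{w_e}{\binom{|e|}{2}}, \qquad (e,\{i,j\}) \in \hat F.
\]
Since every $w_e > 0$ by assumption, this vector is strictly positive, and summing over $\{i,j\} \in \binom{e}{2}$ recovers $w_e$, so it lies in $\sfK$. The set $\sfK$ is the intersection of the nonnegative orthant $\R_+^{\hat F}$ with an affine subspace, so its relative interior is the set of strictly positive points in that affine subspace (the uniform split itself certifies that the affine hull of $\sfK$ is this whole affine subspace). Hence the uniform split lies in $\mathrm{relint}(\sfK)$, settling the second claim.

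For finiteness of the optimum, I need to rule out both $+\infty$ and $-\infty$. To rule out $+\infty$, I will verify $\Phi$ is finite at the uniform split. By \eqref{eq:he-split} every induced conductance $c_{ij}$ is a sum of strictly positive numbers (every edge $\{i,j\} \in F$ lies in at least one hyperedge by the definition of $F$), so $(c_{ij}) \in \R_{++}^F$. Connectedness of $G_0$ then yields $\ker(L_G) = \mathrm{span}(\mathbf{1})$, so $L_G + J \in \cS_{++}^n$ and $\Phi$ takes a finite value at this point.

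To rule out $-\infty$, I will bound $\Phi$ from below on \emph{all} of $\sfK$, using the constraint already encoded in $\sfK$. Computing from \eqref{eq:weighted-lap},
\[
\tr(L_G) = \sum_{\{i,j\} \in F} c_{ij}\,\|\chi_i-\chi_j\|^2 = 2 \sum_{\{i,j\} \in F} c_{ij} = 2 \sum_{e \in E} w_e,
\]
where the last equality combines \eqref{eq:he-split} with \eqref{eq:cap1}. Thus on $\sfK$ the trace of the PSD matrix $L_G + J$ is pinned at the constant $2\sum_{e \in E} w_e + n$, so AM-GM on its eigenvalues gives $\det(L_G + J) \leq \bigl((2\sum_e w_e + n)/n\bigr)^n$, producing a uniform finite lower bound on $\Phi$ over $\sfK$.

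There is no serious obstacle here. The uniform split handles the relative-interior claim by inspection and the finiteness at that point by connectedness of $G_0$. The only observation worth naming is that the equality constraints defining $\sfK$ merely redistribute each $w_e$ among the pairs in $\binom{e}{2}$ without changing the total, so $\tr(L_G)$ is \emph{constant} (not merely bounded) on $\sfK$; this is what forces $\log\det(L_G + J)$ to stay bounded above and hence $\Phi$ bounded below.
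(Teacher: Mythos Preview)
Your proof is correct and follows essentially the same approach as the paper: both exhibit the uniform split $c_{ij}^e = w_e/\binom{|e|}{2}$ as the strictly positive feasible point, and both bound $\det(L_G+J)$ from above via the eigenvalues using that $\sum_{\{i,j\}\in F} c_{ij} = \sum_{e\in E} w_e$ on $\sfK$. The only cosmetic difference is that the paper bounds the maximum eigenvalue of $L_G$ by $2\sum_e w_e$ directly, whereas you compute the trace and apply AM--GM; these are the same observation packaged slightly differently.
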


\begin{proof}
   It is straightforward to check that the maximum of eigenvalue of $L_G$ is bounded by
   $2 \sum_{\{i,j\} \in {e \choose 2}} c_{ij} = 2\sum_{e \in E} w_e$, hence the value of \eqref{eq:optimization} is finite.
   Moreover, the vector defined by $c_{ij}^e \seteq \frac{1}{|{e \choose 2}|} w_e$ is feasible
   and lies in $\R_{++}^{\hat{F}}$ since the weights $w_e$ are strictly positive.
\end{proof}

\smallskip

We can write the corresponding Lagrangian as
\begin{align*}
   g\left((c_{ij}^e); \alpha,\beta\right) = - \log &\det\left(L_G\left((c_{ij})\right) + J\right) 
   + \sum_{e \in E}  \alpha_e \left(\sum_{\{i,j\} \in {e \choose 2}} c_{ij}^e - w_e\right) - \sum_{e \in E} \sum_{\{i,j\} \in {e \choose 2}} \beta_{ij}^e c_{ij}^e
\end{align*}

\pref{lem:ok-prog} allows one to conclude that there are vectors $(\hat{c}_{ij}^e), \hat{\alpha},\hat{\beta}$ with $\hat{\beta} \geq 0$
and such that the KKT conditions hold;
see \cite[Thm 28.2]{Rockafellar70}.
In particular, for all $e \in E$ and $\{i,j\} \in {e\choose 2}$, we have
\begin{align}
   \partial_{c_{ij}^e}\, g\!\left((\hat{c}_{ij}^e); \hat{\alpha},\hat{\beta}\right) &= 0\,,\label{eq:kkt1} \\
   \hat{\beta}_{ij}^e > 0 \implies \hat{c}_{ij}^e &= 0\,. \label{eq:kkt2}
\end{align}

By the rank-one update formula for the determinant, we have
\[
   \partial_{c_{ij}^e} \log \det(L_G+ J) = \langle \chi_i-\chi_j, (L_G + J)^{-1} (\chi_i-\chi_j)\rangle\,.
\]
Define $\hat{L}_G \seteq L_G\left((\hat{c}_{ij})\right)$.
Define $\hat{\sfR}_{ij} \seteq \langle \chi_i-\chi_j, \hat{L}_G^{+} (\chi_i-\chi_j)\rangle$.
Taking the derivative of $g$ with respect to each $c_{ij}^e$ and using \eqref{eq:kkt1} gives
\[
   \hat{\sfR}_{ij} =
   \langle \chi_i-\chi_j, (\hat{L}_G+ J)^{-1} (\chi_i-\chi_j)\rangle = \hat{\alpha}_e - \hat{\beta}_{ij}^e,\qquad \forall e \in E, \{i,j\} \in {e \choose 2}\,,
\]
where the first equality uses the fact that the eigenvectors of $\hat{L}_G$ and $J$ are orthogonal and $\chi_i - \chi_j \in \ker{J}$.

Note that since $\hat{\beta} \geq 0$ coordinate-wise, this implies that
\[
   \hat{\sfR}_{\max}(e) \seteq \max_{\{i,j\} \in {e \choose 2}} \hat{\sfR}_{ij} \leq \hat{\alpha}_e\,.
\]
Moreover, if $\hat{c}_{ij}^e > 0$, then $\hat{\beta}^e_{ij} = 0$ (cf. \eqref{eq:kkt2}), and in that case $\hat{\sfR}_{ij} = \hat{\alpha}_e = \hat{\sfR}_{\max}(e)$.

\smallskip

We conclude that the edge conductances $\hat{c}_{ij}^e$ yield $K \leq 1$ in \eqref{eq:hatk}, and therefore
\pref{lem:hsparse2} gives a sparsifier with $O(\frac{\log D}{\e^2} n \log n)$ edges, completing the proof of \pref{thm:main}.

\subsection*{Acknowledgements}

I am grateful to Thomas Rothvoss for many suggestions and comments on preliminary drafts.

\bibliographystyle{alpha}
\bibliography{diffusive}

\end{document}